\newtheorem{theorem}{Theorem}[section]
\newtheorem{corollary}[theorem]{Corollary}
\newtheorem{lemma}[theorem]{Lemma}
\newtheorem{lemma and definition}[theorem]{Lemma and Definition}
\newtheorem{proposition}[theorem]{Proposition}
\newtheorem{definition}[theorem]{Definition}
\newtheorem{exam}[theorem]{Example}
\newtheorem{remark}[theorem]{Remark}
\newtheorem{the construction}[theorem]{THE CONSTRUCTION}
\numberwithin{equation}{section}
\DeclareFontFamily{T1}{calligra}{}
\DeclareFontShape{T1}{calligra}{m}{n}{<->s*[1.44]callig15}{}
\DeclareMathAlphabet\mathrsfso      {U}{rsfso}{m}{n}
\newcommand*\rel@kern[1]{\kern#1\dimexpr\macc@kerna}
\newcommand*\widebar[1]{%
	\begingroup
	\def\mathaccent##1##2{%
		\rel@kern{0.8}%
		\overline{\rel@kern{-0.8}\macc@nucleus\rel@kern{0.2}}%
		\rel@kern{-0.2}%
	}%
	\macc@depth\@ne
	\let\math@bgroup\@empty \let\math@egroup\macc@set@skewchar
	\mathsurround\z@ \frozen@everymath{\mathgroup\macc@group\relax}%
	\macc@set@skewchar\relax
	\let\mathaccentV\macc@nested@a
	\macc@nested@a\relax111{#1}%
	\endgroup
}
\newcommand{\field}[1]{\mathbb{#1}}
\newcommand{\Z }{\field{Z}}
\newcommand{\N }{\field{N}}
\DeclareMathOperator{\ap}{Ap}
\DeclareMathOperator{\pf}{PF}
\DeclareMathOperator{\sg}{SG}
\DeclareMathOperator{\e}{e}
\DeclareMathOperator{\m}{m}
\DeclareMathOperator{\co}{C}
\DeclareMathOperator{\fr}{F}
\DeclareMathOperator{\G}{G}
\DeclareMathOperator{\arfg}{ArfG}
\DeclareMathOperator{\arf}{Arf}
\DeclareMathOperator{\msgArf}{Arfmsg}
\title{IRREDUCIBILITY OF ARF NUMERICAL
	SEMIGROUPS}
\author{Meral S\"{u}er }
\address{Department of Mathematics, Batman University, Batman, Turkey}
\email{meral.suer@batman.edu.tr}
\begin{document}
\maketitle

\begin{abstract}
In this paper, we introduce the concept of Arf special gaps of an Arf numerical semigroup, and an algorithm for computing all Arf special gaps of a given Arf numerical semigroup. We introduce the concept of Arf-irreducible numerical semigroups, and draw conclusions about all these concepts. We give a system of generators for the Frobenius variety and variety of families of Arf numerical semigroups. Moreover, we obtain the minimal Arf system of generators of a given Arf numerical semigroup.
\end{abstract}

\section{Introduction}

Let $\N$ and $\Z$ denote the set of non-negative integers and integers, respectively. A subset $H$ of $\N$ is a numerical semigroup if it is closed under addition, has a finite complement in $\N$ and  $0\in H$. Given  $X=\left\lbrace x_1,\dots,x_k\right\rbrace \subseteq \N$, $\left\langle X\right\rangle $ will denote the submonoid of $\N$ generated by $X$; that is,
$$\langle  X \rangle =\langle x_1,\dots,x_k \rangle = \{ n_1 x_1 +\dots+n_k x_k:n_i \in \N \}.$$

If  $X\subset H$ and $H=\langle  X \rangle $, $X$ is called a system of generators of $H$. If no proper subset of such $X$ generates  $H$, then $X$ is a minimal system of generators of $H$. Every numerical semigroup admits a unique  minimal system of generators. If $X=\left\lbrace x_1,\dots,x_k\right\rbrace$ is the minimal system of generators of $H$, then $k$ is called the embedding dimension of $H$, denoted by $\operatorname{e}(H)$, and $x_1$ is called the multiplicity of $H$ and denoted by $\m(H)$ which is the least positive integer in $H$. It is known that $\e(H) \leq \m(H)$  \cite[Chapter 1]{Ros09}. The numerical semigroup $H$ is said to have maximal embedding dimension if $\e(H)=\m(H)$.

If $H$ is a numerical semigroup, we assume $H=\left\lbrace h_0=0,h_1,\dots,h_n,\rightarrow\right\rbrace $ where “ $\rightarrow $” means that all subsequent natural numbers which are bigger then $h_n$, belong to $H$ and $\{h_0=0,h_1,\dots,h_{n} \}$ is the set of small elements of $H$.

A numerical semigroup $H$ is Arf if for all $h_i,h_j,h_k\in H$, with $h_i\geq h_j\geq h_k$, we have $h_i+ h_j-h_k\in H$. This property is equivalent to $2h_i- h_j\in H$ for all $h_i,h_j\in H$ with $h_i\geq h_j$. It is well known that every Arf numerical semigroup has maximal embedding dimension.

We say  $x\in\N $ is a gap of $H$ if $x\notin H$. The largest gap of $H$ is called the Frobenius number of $H$, denoted $\fr(H)$. If $H=\N$, then $\fr(H)$ is defined to be $-1$. The set of all gaps of $H$ is denoted by $\G(H)$. Moreover, if $x \in \G(H)$ and  $k$ is a non-negative integer such that $k$ divides  $x$, then $k\in \G(H)$. The smallest element of $H$, to which all subsequent natural numbers belong to $H$ is called the conductor of $H$, denoted by  $\co(H)$. Clearly, $\co(H) = \fr(H) + 1$ and $\co(\N)=0$. We have $\co(H)\geq 2$ if and only if $H \neq \N$.
The Ap\'ery set of $H$ with respect to a nonzero $h\in H$ is defined as
$$ \ap(H,h)=\{x \in H : x - h\not \in H\}.$$
With the help of previous studies, we can express the Ap\'ery set as follows. The Ap\'ery set $ \ap(H,h)$  constitutes a complete set of residues modulo $h$ and we shall write  $w(i)$ for the unique $w(i)\in\ap(H,h)$ satisfying  $w(i) \equiv i \ ($mod \ $h)$ \cite[Chapter 1]{Ros09}.
It is also quite simple to see the following
\begin{itemize}
	\item $H = \langle  h, w(1), \ldots , w(h-1) \rangle$,
	\item $\fr(H) = {\mbox{max(Ap}}(H,h)) - h$,
	\item $\co(H)= {\mbox{max(Ap}}(H,h))- h +1$,	
	\item $ {\mbox{max(Ap}}(H,h))=\co(H) + h -1.$	
	
\end{itemize}
Moreover, we define the following order relation in $\Z$ associated to $H :$ for $x,y\in\Z$, $x\leqslant_{H} y$ if $y-x\in H$. The set of maximal elements of $\Z\setminus H $ with respect to $\leqslant_{H}$ is the set of pseudo-Frobenius numbers of $H$, denoted by $\pf(H)$. The elements of the set $\sg(H) =\{x \in \pf(H) :2x\in H\} $ are called the special gaps of $H$.

Let $H$ be a numerical semigroup. A numerical semigroup $H$ is symmetric if and only if for all $x\in \Z\setminus H$, we have $\fr(H)-x\in H$. A numerical semigroup $H$ is pseudo-symmetric if and only if $\fr(H)$ is even and for all  $x\in \Z\setminus H$, either
$\fr(H)-x\in H$ or $x=\frac{\fr(H)}{2}$.

Numerical semigroup studies are always up-to-date and have applications in many fields. Some of the research on numerical semigroups are based on algebraic curves studied in algebraic geometry. Numerical semigroups are important because of their application to coding theory and algebraic curves (see for example \cite{Farran}). There is also a large literature on combinatorial aspects of numerical semigroups. Arf solved the classification problem of branches of a singular curve, depending on their multiplicity sequences in \cite{Arf}. The concept of Arf numerical semigroup is one of the important concepts emerging in the numerical semigroup theory. Although relevance of numerical semigroups with maximal embedding dimension  occur naturally among other numerical semigroups, it has gained a special reputation due to current applications to commutative algebra through its associated semigroup rings (see for eaxamples \cite{Barucci,Lipman}). The consideration of some properties of an analytically unramified one-dimensional local domain is accomplished using their value semigroups. Lipman introduces the Arf closure of the coordinate ring of the curve, and then its value semigroup (which is an Arf numerical semigroup) in \cite{Lipman}. With the help of Arf sequences, in \cite{H.Ibrahim}, the set of Arf numerical semigroups with given genus and conductor are calculated. Important conclusions about the minimal Arf system of generators of Arf numerical semigroups can also be reached in \cite{Branco} . 

In this research, we introduce the concept of Arf special gaps of a given Arf numerical semigroup which are dual to the concept of Arf minimal generators. With the aid of the Arf special gaps, we describe a way to calculate all Arf numerical semigroups that contain a given Arf numerical semigroup. We define Arf-irreducible semigroups which is the natural restriction of irreducibility to the variety of Arf numerical semigroups. The concept of Arf-irreducible semigroup is not the same as that the numerical semigroup is both Arf and irreducible. A numerical semigroup is called irreducible if it can not be expressed as an intersection of two numerical semigroups containing it properly which are introduced in \cite{Ros08}. We describe the varieties of families of Arf numerical semigroups and search for generators of these varieties. We also obtain the minimal Arf system of generators of a given Arf numerical semigroup.

Procedures presented in this manuscript  have been implemented by Pedro A. García Sánchez in \href{https://www.gap-system.org/}{ {\bf GAP}} \cite{GAP} and available sections 8.2.9 through 8.2.12 of  1.3.0 dev version of the package  \href{https://gap-packages.github.io/numericalsgps}{ {\bf numericalsgps} } \cite{Delgado}. 

\section{Arf special gaps of an Arf numerical semigroup} {\label{sec 3}}
In this section, we introduce the concept of Arf special gaps of an Arf numerical semigroup which are dual to the concept of Arf minimal generators in Arf characters. The motivation behind this definition is to find the set of all Arf numerical semigroups containing a given Arf numerical semigroup. We also present an algorithm for finding Arf special gaps of a given Arf numerical semigroup and get some results related to this concept.
\vspace*{0.2cm}
\begin{proposition} \cite[Proposition 9]{Ros10} {\label{Pro. 3}}  Let $H$ be a numerical semigroup and let $x\in \G(H) $. The following statements are equivalent:
	\begin{enumerate}
		\item 	$x\in \sg(H)$,
		\item  	 $H\cup \{x\} $ is a numerical semigroup.	
	\end{enumerate} 
\end{proposition}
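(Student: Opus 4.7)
The plan is to prove the two implications separately, using only the definitions of $\sg(H)$, $\pf(H)$, and the order $\leq_H$.

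For (1) $\Rightarrow$ (2), I assume $x \in \sg(H)$, so $x$ is a maximal element of $\Z \setminus H$ under $\leq_H$ and $2x \in H$. I need to check that $H \cup \{x\}$ is closed under addition (it already contains $0$ and has finite complement in $\N$ since $H$ does). Sums of two elements of $H$ stay in $H$; the sum $x+x=2x$ lies in $H$ by hypothesis. The remaining case is $x+h$ with $h \in H \setminus \{0\}$. Then $x <_H x+h$ because $(x+h)-x = h \in H$ and $x+h \neq x$. Since $x$ is maximal in $\Z \setminus H$ with respect to $\leq_H$, this forces $x+h \in H$. Hence $H \cup \{x\}$ is closed under addition and is a numerical semigroup.

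For (2) $\Rightarrow$ (1), I assume $H \cup \{x\}$ is a numerical semigroup, so it is closed under addition. First, $2x \in H \cup \{x\}$; but $x$ is a positive integer (because $x$ is a gap and $0 \in H$), so $2x \neq x$ and therefore $2x \in H$. Next I show $x \in \pf(H)$. Suppose for contradiction that some $y \in \Z \setminus H$ satisfies $x <_H y$, that is, $y \neq x$ and $y - x \in H$. Writing $y = x + (y-x)$ and using the closure of $H \cup \{x\}$ under addition, we obtain $y \in H \cup \{x\}$. Since $y \notin H$ and $y \neq x$, this is a contradiction. Therefore $x$ is maximal in $\Z \setminus H$ with respect to $\leq_H$, so $x \in \pf(H)$, and combined with $2x \in H$ this gives $x \in \sg(H)$.

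Neither direction presents a real obstacle; the whole statement is essentially a direct unpacking of the definitions of $\leq_H$, $\pf(H)$, and $\sg(H)$. The only point that needs a small justification rather than a one-line invocation is ensuring $x \neq 0$ when concluding $2x \in H$ in the converse direction, which is immediate from $x \in \G(H)$ and $0 \in H$.
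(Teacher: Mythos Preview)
The paper does not supply its own proof of this proposition; it is quoted verbatim from \cite[Proposition~9]{Ros10} and used as a known fact. Your argument is correct and is precisely the standard verification one finds in the literature: closure of $H\cup\{x\}$ under addition is forced by $2x\in H$ together with the $\leq_H$-maximality of $x$ in $\Z\setminus H$, and conversely closure forces both conditions back. There is nothing to compare against in this manuscript, but your write-up would serve perfectly well as the omitted proof.
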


\begin{proposition}\cite[Lemma $4.35$]{Ros09} {\label{Pro. 1}}  Let $H_1$ and $H_2$ be two numerical semigroups such that $H_1\subsetneq H_2$. Then $H_1\cup \left\lbrace {\mbox{max}}(H_2\setminus H_1)) \right\rbrace $ is a numerical semigroup, or equvalently, $\mbox{max}(H_2\setminus H_1)\in \sg(H_1)$.
\end{proposition}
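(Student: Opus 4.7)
The plan is to set $x \coloneqq \max(H_2 \setminus H_1)$ and show directly that $H_1 \cup \{x\}$ is a numerical semigroup; then Proposition \ref{Pro. 3} immediately yields $x \in \sg(H_1)$, giving both assertions at once.

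First I would check that $x$ is well-defined. Since $H_1 \subsetneq H_2$, the set $H_2 \setminus H_1$ is non-empty; and since $H_1$ has finite complement in $\N$, the set $H_2 \setminus H_1 \subseteq \N \setminus H_1 = \G(H_1)$ is finite. So the maximum exists, and in particular $x \in \G(H_1)$, which takes care of the first requirement for membership in $\sg(H_1)$.

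Next I would prove closure of $H_1 \cup \{x\}$ under addition, which is the only nontrivial condition (the set contains $0$ and has finite complement because $H_1$ already does). Sums of two elements of $H_1$ lie in $H_1$. For the remaining cases, let $a \in H_1 \cup \{x\}$ be nonzero and consider $a + x$. Since $H_1 \subseteq H_2$ and $x \in H_2$, we have $a + x \in H_2$. Because $0 \in H_1$ forces $x \geq 1$, and $a \geq 1$, we obtain $a + x > x$. Now either $a+x \in H_1$, in which case we are done, or $a+x \in H_2 \setminus H_1$, which would contradict the maximality of $x$. Hence $a + x \in H_1 \subseteq H_1 \cup \{x\}$. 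This handles both $a \in H_1 \setminus \{0\}$ and $a = x$ (the latter giving $2x \in H_1$).

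Therefore $H_1 \cup \{x\}$ is a numerical semigroup. By Proposition \ref{Pro. 3}, this is equivalent to $x \in \sg(H_1)$, which completes the proof. The only delicate point is the maximality argument in the last step; there is no real obstacle beyond being careful that $x$ is a positive integer so that strict inequalities of the form $a + x > x$ are available.
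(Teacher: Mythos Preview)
Your argument is correct. You verify well-definedness of $x=\max(H_2\setminus H_1)$, note that $x\in\G(H_1)$, and then establish closure of $H_1\cup\{x\}$ under addition via the maximality of $x$ (any sum $a+x$ with $a\neq 0$ lands in $H_2$ and strictly exceeds $x$, hence must lie in $H_1$); the remaining semigroup axioms are inherited from $H_1$, and Proposition~\ref{Pro. 3} then gives $x\in\sg(H_1)$.

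As for comparison: the paper does not supply its own proof of this proposition but simply quotes it from \cite[Lemma~4.35]{Ros09}. Your direct verification is exactly the standard argument one finds there, so there is nothing further to contrast.
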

\begin{definition} {\label{Def. 1}} Given an Arf numerical semigroup $H$, denote by
	$$	\arfg(H) =\{x\in \sg(H):H\cup\{x\} \textrm{ is an Arf numerical semigroup} \}.$$
	The elements of this set are called Arf special gaps of $H$.	
\end{definition}
\begin{proposition} {\label{Pro. a}}  Let $H=\{h_0=0,h_1,\dots,h_{n-1},h_n,\rightarrow \}$ be an Arf numerical semigroup and $x\in \G(H)$. Assume that $h_i<x<h_{i+1}$, $0\leqslant i\leqslant n-1$. Then $x\in \arfg(H) $ if and only if $x\in \sg(H)$, $2x-h_i\in H$ and $2h_{i+1}-x\in H$.
\end{proposition}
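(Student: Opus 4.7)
The plan is to invoke Proposition \ref{Pro. 3} at the outset: it says that $H' := H \cup \{x\}$ is a numerical semigroup if and only if $x \in \sg(H)$. Hence, under the standing assumption $x \in \sg(H)$, the proposition reduces to the equivalence: $H'$ is Arf if and only if $2x - h_i \in H$ and $2h_{i+1} - x \in H$. I will use the two-element form of the Arf condition ($a \geq b$ in $H'$ implies $2a - b \in H'$) and its equivalent three-element form ($a \geq b \geq c$ implies $a + b - c \in H'$) interchangeably.

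For the forward direction, both identities follow by applying the Arf property of $H'$ to the pairs $(x, h_i)$ and $(h_{i+1}, x)$, both valid since $h_i < x < h_{i+1}$. Each application produces an element of $H'$, and neither can coincide with $x$ (otherwise $x = h_i$ or $x = h_{i+1}$), so both $2x - h_i$ and $2h_{i+1} - x$ must lie in $H$.

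For the converse, I would verify the Arf condition for $H'$ by cases depending on whether $a$ or $b$ equals $x$. The case $a, b \in H$ follows from $H$ being Arf, and $a = b = x$ is immediate. The first non-trivial case is $a = x$, $b = h_j$ with $0 \leq j \leq i$: when $j = i$ this is a hypothesis, while for $j < i$ three-element Arf applied in $H$ to the triple $(2x - h_i, h_i, h_j)$ --- valid because $2x - h_i$ must exceed $h_i$ and thus be $\geq h_{i+1} > h_j$ --- yields $(2x - h_i) + h_i - h_j = 2x - h_j \in H$.

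The main obstacle is the remaining case: $b = x$ and $a \in H$ with $a \geq h_{i+1}$. To extract $2a - x \in H$ from the single hypothesis $2h_{i+1} - x \in H$ for arbitrary such $a$, I would apply three-element Arf twice. Since $2h_{i+1} - x > h_{i+1}$ and $a \geq h_{i+1}$, both $a$ and $2h_{i+1} - x$ lie in $H$ above $h_{i+1}$; three-element Arf on $(a, 2h_{i+1} - x, h_{i+1})$ then produces the auxiliary element $u := a + h_{i+1} - x \in H$. Because $h_{i+1} > x$, we have $u > a \geq h_{i+1}$, so a second application of three-element Arf on $(u, a, h_{i+1})$ gives $u + a - h_{i+1} = 2a - x \in H$. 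With this, every case of the Arf condition for $H'$ is discharged and $H'$ is Arf, completing the converse.
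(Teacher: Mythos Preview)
Your argument is correct and follows essentially the same route as the paper: both use Proposition~\ref{Pro. 3} to reduce to the Arf condition on $H\cup\{x\}$, handle the forward direction by evaluating the Arf property at the pairs $(x,h_i)$ and $(h_{i+1},x)$, and for the converse run the same case split on which of $a,b$ equals $x$. The only cosmetic difference is in the case $b=x$, $a>h_{i+1}$: the paper dispatches it in a single three-element Arf step via the identity $2a-x=(2a-h_{i+1})+(2h_{i+1}-x)-h_{i+1}$, whereas you reach the same conclusion with two successive applications through the auxiliary element $u=a+h_{i+1}-x$.
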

\begin{proof} Assume that $x\in \arfg( H)$. Then $H\cup\left\lbrace x\right\rbrace$ is an Arf numerical semigroup. Hence $x\in \sg(H)$ by Proposition {\ref{Pro. 3}}. Moreover, $2x-h_i, 2h_{i+1}-x\in H\cup \{x\}$ since $H\cup \{x\}$ is Arf. Since $2x-h_i$ and $2h_{i+1}-x$ are different from $x$, $2x-h_i$ and $2h_{i+1}-x\in H$ are elements of $H$. Conversely, assume that $x\in \sg(H)$; $2x-h_i\in H$ and $2h_{i+1}-x\in H$. So, $H\cup \{x\} $ is a numerical semigroup by Proposition {\ref{Pro. 3}}, and for  $j\geq k$,  $2h_j-h_k\in H$ since $H$ is Arf. If $j<i$, then	
	$$2x-h_j=(2x-h_i)+(2h_i-h_j)-h_i\in H;$$ 
	since $2x-h_i>x>h_i$, $2h_i-h_j>h_i$ and $H$ is Arf.
	
	If $j>i+1$, then
	$$2h_j-x=(2h_j-h_{i+1})+(2h_{i+1}-x)-h_{i+1}\in H;$$ 
	since $2h_j-h_{i+1}>h_{i+1}$, $2h_{i+1}-x>h_{i+1}$ and $H$ is Arf.	
	
	Thus $H\cup \{x\}$ is an Arf numerical semigroup, and $x\in \arfg( H)$.
	\end{proof} 	

Algorithm  {\ref{CHalgorithm}} provides the procedure to calculate all Arf special gaps of a given Arf numerical semigroup.

\begin{algorithm}[H]
	\label{CHalgorithm}
	\DontPrintSemicolon
	
	\KwInput{An Arf Numerical Semigroup $H=\{h_0=0,h_1,\dots,h_n,\rightarrow \}	$}
	\KwOutput{The Set of Arf special gaps of $H$, $\arfg(H)$}
	\KwInitialize{$\arfg(H)= \varnothing $}
	\KwProcedure{The following}
	
	Calculate $\sg( H)=\{e_1,\dots,e_r\}$
	\\	List $h_i<e_j<h_{i+1}$ for $i\in\{0,1,\dots n-1 \}$ and  $j\in\{1,\dots, r \}$
	\\ If $2e_j-h_i\in H$ and $2h_{i+1}-e_j\in H$, then add $ e_j$ to $\arfg(H)$\\
	Output $\arfg(H)$, and stop
	\caption{Algorithm for computing Arf special gaps of an Arf numerical semigroup}
\end{algorithm}
\begin{exam}{\label{exam. 2}} {\label{exam. 2}} Let $H$ be the Arf numerical semigroup
	
$$H=\langle10,24,25,26,27,28,29,31,32,33\rangle. $$ 
Let us compute $\arfg(H)$ using Algorithm  {\ref{CHalgorithm}}, we get
	$$H=\langle10,24,25,26,27,28,29,31,32,33\rangle=\{0,10,20,24,\rightarrow \}$$
	\begin{enumerate}
		\item $\sg( H)=\{14,15,16,17,18,19,21,22,23\}$
		\item  $10\leq14\leq20$, $10\leq15\leq20$, $10\leq16\leq20$, $10\leq17\leq20$, $10\leq18\leq20$, $10\leq19\leq20$, $20\leq21\leq24$, $20\leq22\leq24$ and $20\leq23\leq24$.
		\item
		\begin{enumerate}
			\item Since $2\cdot14-10=18\notin H$, $14$ can not be added into $ \arfg( H)$.
			\item Since $2\cdot15-10=20\in H$ and  $2\cdot20-15=25\in H$, we add $15$ into $\arfg( H)$.
			\item Since $2\cdot16-10=22\notin H$, $16$ can not be added into $ \arfg( H)$.
			\item Since $2\cdot20-17=23\notin H$, $17$ can not be added into $ \arfg( H)$.
			\item Since $2\cdot20-18=22\notin H$, $18$ can not be added into $ \arfg( H)$.
			\item Since $2\cdot20-19=21\notin H$, $19$ can not be added into $ \arfg( H)$.
			\item Since $2\cdot21-20=22\notin H$, $21$ can not be added into $ \arfg( H)$.
			\item Since $2\cdot22-20=24\in H$ and  $2\cdot24-22=26\in H$, we add $22$ into $\arfg( H)$.
			\item Since $2\cdot23-20=26\in H$ and  $2\cdot24-23=25\in H$, we add $23$ into $\arfg( H)$.
		\end{enumerate}
	\end{enumerate}	
	Thus, we find $\arfg( H)=\{15,22,23\}$. Furtheremore,
	$H\cup \{15\}=\{0,10,15,20,24,\rightarrow \}$, $H\cup \{22\}=\{0,10,20,22,24,\rightarrow \}$ and $H\cup \{23\}=\{0,10,20,23,\rightarrow \}$ are Arf numerical semigroups.	
	
	The manual operations given above can be done in the \href{https://www.gap-system.org/}{ {\bf GAP}}  session with the 
	\href{https://gap-packages.github.io/numericalsgps}{ {\bf numericalsgps} } package as follows:                                          
 \begin{verbatim}
	gap> s:=NumericalSemigroup(10,24,25,26,27,28,29,31,32,33);;
	gap> ArfSpecialGaps(s);
	[ 15, 22, 23 ]
\end{verbatim}
\end{exam} 
\begin{proposition} {\label{Pro. 7}} Let $H_1$ and $H_2$ be two Arf numerical semigroups such that $H_1\subsetneq H_2$. Then $H_1\bigcup \{ \max (H_2\setminus H_1)\} $ is an Arf numerical semigroup, or equivalently, $\max (H_2\setminus H_1) \in \arfg(H_1)$.
\end{proposition}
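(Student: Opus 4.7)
The plan is to reduce the statement to the characterization of Arf special gaps given in Proposition \ref{Pro. a}. Set $x=\max(H_2\setminus H_1)$. Since $x$ is a gap of $H_1$ and hence $x<\co(H_1)$, there exist consecutive small elements $h_i,h_{i+1}$ of $H_1$ with $h_i<x<h_{i+1}$. It then suffices to verify the three conditions of Proposition \ref{Pro. a}: namely, $x\in\sg(H_1)$, $2x-h_i\in H_1$, and $2h_{i+1}-x\in H_1$.

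The first condition $x\in\sg(H_1)$ is immediate from Proposition \ref{Pro. 1} applied to the proper inclusion $H_1\subsetneq H_2$. For the remaining two conditions, I would exploit the Arf property of the larger semigroup $H_2$: since $x,h_i,h_{i+1}\in H_2$ with $x\geq h_i$ and $h_{i+1}\geq x$, the Arf property of $H_2$ forces
\[
2x-h_i\in H_2\quad\text{and}\quad 2h_{i+1}-x\in H_2.
\]

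The crux of the argument, and the only point that actually uses the maximality of $x$, is the observation that both of these elements are strictly greater than $x$. Indeed, $2x-h_i>x$ because $x>h_i$, and $2h_{i+1}-x>x$ because $h_{i+1}>x$. Since $x=\max(H_2\setminus H_1)$, any element of $H_2$ strictly exceeding $x$ must already lie in $H_1$. Therefore $2x-h_i\in H_1$ and $2h_{i+1}-x\in H_1$, which completes the verification of the hypotheses of Proposition \ref{Pro. a} and yields $x\in\arfg(H_1)$, equivalently, that $H_1\cup\{x\}$ is an Arf numerical semigroup.

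There is no significant obstacle beyond recognizing that Proposition \ref{Pro. a} is exactly tailored to this situation and that the maximality of $x$ is used precisely to push the two ``Arf-witness'' elements down from $H_2$ into $H_1$. The only minor bookkeeping is confirming that the consecutive small elements $h_i,h_{i+1}$ bracketing $x$ exist, which follows from $x<\co(H_1)=h_n$.
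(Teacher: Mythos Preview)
Your proof is correct. The difference from the paper's argument is one of packaging: the paper verifies the Arf property of $H_1\cup\{x\}$ directly by a case analysis on $a,b\in H_1\cup\{x\}$ with $a\geq b$ (distinguishing whether $a=x$, $b=x$, or neither), whereas you instead reduce to the characterization in Proposition~\ref{Pro. a} and check its three hypotheses. The underlying mechanism is identical in both proofs---the Arf property of $H_2$ produces the needed elements, and the maximality of $x$ forces them from $H_2$ down into $H_1$---but your route is slightly more economical since Proposition~\ref{Pro. a} has already absorbed the case analysis, so there is no need to repeat it here. The paper's version, on the other hand, is more self-contained and does not require locating the bracketing small elements $h_i<x<h_{i+1}$.
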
 
\begin{proof} Let $H_1$ and $H_2$ be two Arf numerical semigroups such that $H_1\subsetneq H_2$, and let $\mbox{max}(H_2\setminus H_1)=x$. Thus  $H_1\cup \{x\} $ is a numerical semigroup by Proposition {\ref{Pro. 1}}. Consider $a,b\in H_1\cup \{x\} $ with $a>b$. If $a=x$, then $2a-b=2x-b\in H_2$ as $H_2$ is Arf. Since $2x-b>x$, we conclude that $2a-b\in H_1$ as $\mbox{max}(H_2\setminus H_1)=x$. Similarly, if $b=x$, then $2a-b=2a-x\in H_1$; $2a-x>x$ and thus $2a-b\in H_1$. Finally, if $a\neq x$ and $b\neq x$, then $a,b\in H_1$. So $2a-b\in H_1$ as $H_1$ is Arf. Hence $2a-b\in H_1\cup \{x\}$ for any $a,b\in H_1\cup \{x\}$ with $a\geq b$, and thus $H_1\cup \{x\}$ is an Arf numerical semigroup.
\end{proof} 
Given an Arf numerical semigroup $H$, we use $\mathrsfso{A}(H)$ to denote the set of all Arf numerical semigroups containing $H$. Since the complement of $H$ in $\N$ is finite, $\mathrsfso{A}(H)$ is finite. Given two Arf numerical semigroups $H$ and $\overline{H}$ with  $H\subseteq \overline{H}$, we define recursively
\begin{enumerate}
	\item $H_0 =H$;	
	\item \(
	H_{n+1}= \left\{ \begin{array}{ll}
		H_{n}\cup\{\max(\overline{H}\setminus H_n)\}  & \textrm{ if  $H_{n}\neq \overline{H};$}\\
	H_{n} & \textrm{ otherwise.}
	\end{array} \right.
	\)
	
	If the cardinality of  $ \overline{H}\setminus H $  is $r$, then
	$$H=H_0\subsetneq H_1\subsetneq\dots\subsetneq H_{r}=\overline{H}.$$
\end{enumerate}
With this idea, we can form the set $\mathrsfso{A}(H)$.  We begin with $\mathrsfso{A}(H)=\{ H\} $, and then for each element not equal to $\N$ in $\mathrsfso{A}(H)$ (note that $\arfg(\N)$  is the empty set), we attach to $\mathrsfso{A}(H)$  the  Arf numerical semigroups $H\cup\{ x \} $ with $x$ ranging in $\arfg( H)$.
\begin{exam}{\label{exam. 1}} 	(The Following are obtained by using the \href{https://www.gap-system.org/}{ {\bf GAP}}  package
\href{https://gap-packages.github.io/numericalsgps}{ {\bf numericalsgps} }).	Let $H=\langle 6,9,11,13,14,16\rangle=\{0,6,9,11,\rightarrow\}.$ 
	\cite{Delgado}
	\begin{verbatim}
	gap> H:= NumericalSemigroup(6,9,11,13,14,16);
	<Numerical semigroup with 6 generators>
	gap> IsArfNumericalSemigroup(H);
	true
	gap> se:=SmallElements(H);
	[ 0, 6, 9, 11]
	gap> SpecialGaps(NumericalSemigroup(6,9,11,13,14,16));
	[ 3, 7, 8, 10]	
	\end{verbatim}
	Using Algorithm  {\ref{CHalgorithm}} we get $\arfg( H)=\{ 3,10 \}$, and so $H\cup \{3\} =\langle3,11,13\rangle=\{0,3,6,9,11,\rightarrow\}$ and $H\cup \{10\} =\langle6,9,10,11,13,14\rangle=\{ 0,6,9,\rightarrow\}$ are Arf numerical semigroups containing $H$ (the only one that differs in just one element).  As $\arfg( H\cup \{3\})=\{ 10 \}$, we obtain a new Arf numerical semigroup $H\cup \{3,10\}$ containing $H\cup \{3\}$. Similarly, as $\arfg( H\cup \{10\})=\{ 3,8 \}$, we get two new Arf numerical semigroups $H\cup \{3,10\}$ and $H\cup \{8,10\}$ containig $H\cup \{10\}$.  If we repeat this process, we get $\mathrsfso{A}(H)=\{ H =\langle 6,9,11,13,14,16\rangle,  \langle  6,9,10,11,13,14\rangle,\\
	 \langle  6,8,9,10,11,13\rangle,
	  \langle  6,7,8,9,10,11\rangle, \langle  5,6,7,8,9\rangle,\langle  4,6,9,11\rangle,\langle  4,6,7,9\rangle, \\
	  \langle  4,5,6,7\rangle,\langle 3,11,13\rangle,
	   \langle 3,10,11\rangle,\langle  3,8,10\rangle,\langle  3,7,8\rangle,\langle  3,5,7\rangle, \langle  3,4,5\rangle,\langle  2,9\rangle,\\
	   \langle  2,7\rangle, \langle  2,5\rangle,\langle  2,3\rangle,\N =\langle 1\rangle\}$, which we draw below as a graph.
	
\begin{figure}[H]
	\centering
	\includegraphics[width=1.0\textwidth]{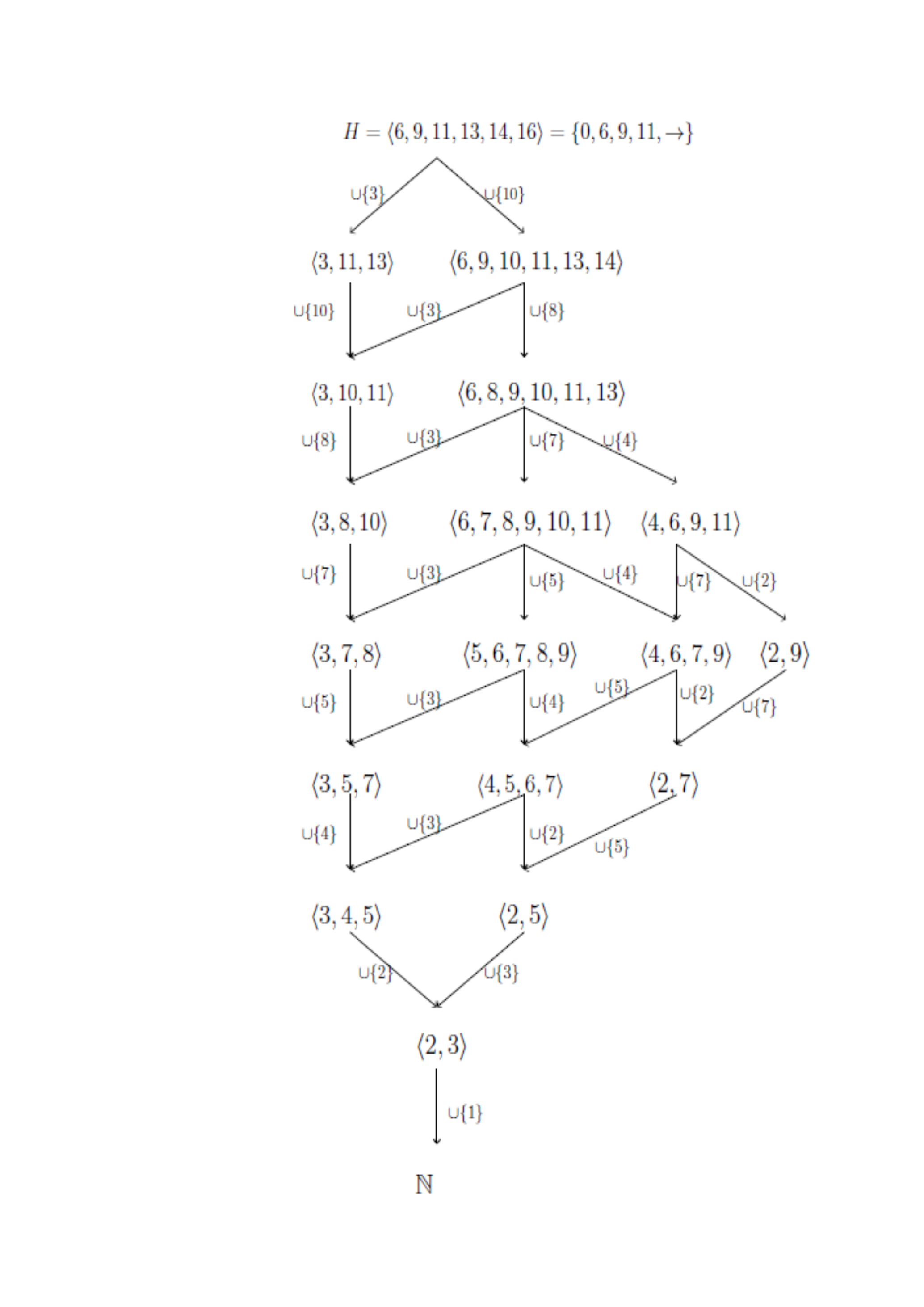}
	\caption{ All Arf numerical semigroups containing $H=\langle 6,9,11,13,14,16\rangle$.\label{fig0}}
\end{figure}
The list of all Arf numerical semigroups containing given Arf numerical semigroup H  in Figure {\ref{fig0}} is calculated as follows in the \href{https://www.gap-system.org/}{ {\bf GAP}}  session with \href{https://gap-packages.github.io/numericalsgps}{ {\bf numericalsgps} } package:
\begin{verbatim}
gap> s:=NumericalSemigroup(6,9,11,13,14,16);;
gap> List(ArfOverSemigroups(s),MinimalGenerators);
[ [ 1 ], [ 2, 3 ], [ 2, 5 ], [ 2, 7 ], [ 2, 9 ], [ 3 .. 5 ], 
[ 3, 5, 7 ], [ 3, 7, 8 ], [ 3, 8, 10 ], [ 3, 10, 11 ],
[ 3, 11, 13 ], [ 4 .. 7 ], [ 4, 6, 7, 9 ], [ 4, 6, 9, 11 ], 
[ 5 .. 9 ], [ 6 .. 11 ], [ 6, 8, 9, 10, 11, 13 ],
[ 6, 9, 10, 11, 13, 14 ], [ 6, 9, 11, 13, 14, 16 ] ]
\end{verbatim}					
\end{exam} 
We conclude from Proposition {\ref{Pro. 7}} is that if $H$ is an Arf numerical semigroup, then in the set of all Arf numerical semigroups that do not contain any elements of $\arfg( H)$, $H$  is maximal (in terms of set inclusion). In addition, $\arfg( H)$ is the smallest set of gaps that determines $H$ up to maximality.
\begin{proposition} {\label{Pro. 8}} Let $H$ be an Arf numerical semigroup and $\{ g_1,..., g_t \}\subseteq \sg( H)$. The following are equivalent:
	\begin{enumerate}
		\item $H$ is maximal (with respect to set inclusion) in the set of all Arf numerical semigroups $\overline{H}$ with
		$\overline{H}\cap\{ g_1,..., g_t \}$ is empty.
		\item  $\arfg( H) \subseteq\{ g_1,..., g_t \} $.
	\end{enumerate}
\end{proposition}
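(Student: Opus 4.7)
The plan is a straightforward double implication, with Proposition \ref{Pro. 7} doing the work for one direction and Definition \ref{Def. 1} together with Proposition \ref{Pro. a} for the other. First note that $H$ itself belongs to the family under consideration, since $\{g_1,\dots,g_t\}\subseteq\sg(H)\subseteq\G(H)$, so each $g_i$ is missing from $H$.

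For $(2)\Rightarrow(1)$: assume $\arfg(H)\subseteq\{g_1,\dots,g_t\}$, and suppose toward a contradiction that there is an Arf numerical semigroup $\overline{H}$ with $H\subsetneq\overline{H}$ and $\overline{H}\cap\{g_1,\dots,g_t\}=\varnothing$. Since $H\subsetneq\overline{H}$, the set $\overline{H}\setminus H$ is nonempty and finite, so $x:=\max(\overline{H}\setminus H)$ exists. By Proposition \ref{Pro. 7}, $x\in\arfg(H)$, hence by hypothesis $x\in\{g_1,\dots,g_t\}$. But $x\in\overline{H}$, contradicting $\overline{H}\cap\{g_1,\dots,g_t\}=\varnothing$. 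Thus no such $\overline{H}$ exists and $H$ is maximal.

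For $(1)\Rightarrow(2)$: assume $H$ is maximal in the family and suppose, for contradiction, there exists $x\in\arfg(H)$ with $x\notin\{g_1,\dots,g_t\}$. By Definition \ref{Def. 1}, $\overline{H}:=H\cup\{x\}$ is an Arf numerical semigroup, and clearly $H\subsetneq\overline{H}$. It remains to check that $\overline{H}\cap\{g_1,\dots,g_t\}=\varnothing$: for each $i$, $g_i\notin H$ because $g_i\in\sg(H)\subseteq\G(H)$, and $g_i\neq x$ because $x\notin\{g_1,\dots,g_t\}$; hence $g_i\notin H\cup\{x\}=\overline{H}$. This contradicts the maximality of $H$, so $\arfg(H)\subseteq\{g_1,\dots,g_t\}$.

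No step is really an obstacle here; the only point worth highlighting is the use of Proposition \ref{Pro. 7}, which guarantees that the maximum element of $\overline{H}\setminus H$ already lies in $\arfg(H)$ (and not merely in $\sg(H)$). This is precisely the dual role that Arf special gaps play with respect to the Arf variety, mirroring how ordinary special gaps govern extensions within the variety of all numerical semigroups.
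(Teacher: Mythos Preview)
Your proof is correct and follows essentially the same route as the paper's: both directions proceed by contradiction, with Proposition~\ref{Pro. 7} supplying the Arf special gap $x=\max(\overline{H}\setminus H)$ in $(2)\Rightarrow(1)$, and Definition~\ref{Def. 1} giving the Arf over-semigroup $H\cup\{x\}$ in $(1)\Rightarrow(2)$. Your write-up is slightly more explicit (noting that $H$ itself lies in the family and spelling out why $g_i\notin H\cup\{x\}$), but the argument is the same; note that you mention Proposition~\ref{Pro. a} in the preamble yet never actually invoke it.
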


\begin{proof} $(1)\Rightarrow (2)$ Let $x\in \arfg( H)$. According to Definition {\ref{Def. 1}}, $H\bigcup \{ x \} $  is an Arf numerical semigroup containing $H$ properly. Thus, if $(1)$ holds, then $(H\bigcup \{ x \})\cap\{ g_1,..., g_t \}\neq\emptyset $, which yields $x\in\{ g_1,..., g_t \}$.
	$(2)\Rightarrow (1)$ Let $\overline{H}$ be an Arf numerical semigroup such that $H\subsetneq\overline{H}$ and $\overline{H}\cap\{ g_1,..., g_t \}=\emptyset$. Take $ \max (\overline{H}\setminus H)=x $. By Proposition {\ref{Pro. 7}}, $H\bigcup \{ \max (\overline{H}\setminus H)\}= H\bigcup \{ x\}$ is an Arf numerical semigroup. From Definition {\ref{Def. 1}}, $x\in \arfg(H)$. Because of the hypothesis, $x\in \{ g_1,..., g_t \}$. But this contradicts the assumption of $\overline{H}$.
\end{proof}

Given a rational number $a$, denote by  $\lceil a \rceil =\min(\Z\cap[a,\infty ))$,  $\lfloor a \rfloor =\max(\Z\cap( -\infty,a] )$.

\begin{proposition} {\label{Pro. 9}} For the Arf numerical semigroup $H= \{ 0,c,\rightarrow \}$, we have
	$$\#(\arfg( H))=\lfloor \frac{\co(H)}{2} \rfloor $$
	(where $\#(A)$ represents the number of elements of $A$).
\end{proposition}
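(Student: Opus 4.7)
The plan is to identify $\arfg(H)$ explicitly for this very constrained semigroup and simply count. First I would record the structure of $H=\{0,c,\rightarrow\}$: its only small elements are $h_0=0$ and $h_1=c$, its gap set is $\G(H)=\{1,2,\dots,c-1\}$, and for any $x\in\G(H)$ one has $x+c\geq c$ and hence $x+c\in H$, so in fact every gap lies in $\pf(H)$. Therefore $\sg(H)=\{x\in\{1,\dots,c-1\}:2x\in H\}=\{x:\lceil c/2\rceil\leq x\leq c-1\}$, a set of cardinality $c-\lceil c/2\rceil=\lfloor c/2\rfloor$.

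Next I would show that in this case $\arfg(H)=\sg(H)$, by invoking Proposition \ref{Pro. a}. For any $x\in\G(H)$ the only small elements flanking $x$ are $h_0=0<x<h_1=c$, so the two Arf conditions read $2x-h_0=2x\in H$ and $2h_1-x=2c-x\in H$. The first of these is exactly the condition $2x\in H$ already built into membership in $\sg(H)$. The second is automatic, because $x\leq c-1$ forces $2c-x\geq c+1$, and every integer $\geq c$ belongs to $H=\{0,c,\rightarrow\}$.

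Combining these two observations, $\arfg(H)=\sg(H)=\{\lceil c/2\rceil,\lceil c/2\rceil+1,\dots,c-1\}$, whose cardinality is $\lfloor c/2\rfloor$, as required. There is no real obstacle in this argument; the only subtle step is noticing that the pseudo-Frobenius condition is vacuous in this particular semigroup, so $\sg(H)$ reduces to the inequality $2x\geq c$, and that the upper Arf condition $2h_1-x\in H$ comes for free from the run of integers beyond the conductor.
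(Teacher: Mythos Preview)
Your proof is correct and follows essentially the same route as the paper: both compute $\pf(H)=\{1,\dots,c-1\}$, deduce $\sg(H)=\{\lceil c/2\rceil,\dots,c-1\}$, and then observe that the extra Arf condition $2c-x\in H$ is automatic, yielding $\arfg(H)=\sg(H)$ with cardinality $\lfloor c/2\rfloor$. The only cosmetic difference is that you invoke Proposition~\ref{Pro. a} explicitly while the paper verifies the Arf property of $H\cup\{x\}$ directly; your justification that every gap is pseudo-Frobenius could be phrased slightly more carefully (one needs $x+h\in H$ for every nonzero $h\in H$, not just $h=c$, but this follows immediately since all such $h$ satisfy $h\geq c$).
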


\begin{proof} Note that $H= \{ 0,c,\rightarrow \}=\langle
	c,c+1,\dots,2c-1\rangle$ and $\pf(H)=\{1,\dots,c-1\}$,  and $\sg(H) =\{x \in \pf(H) :2x\in H\}=\{\lceil \frac{\co(H)}{2}\rceil,\lceil \frac{\co(H)}{2}\rceil+1,\dots ,c-1\}$. We write that  $H\bigcup\{ x \}= \{ 0,x,c,\rightarrow \}$ for all $x\in \sg( H)$.  Since $2c-x>c$, $2c-x\in H\bigcup\{ x \}$. Then  $H\bigcup\{ x \}$ is an Arf numerical semigroup for all $x\in \sg( H)$. Hence, $ \arfg( H)= \sg( H)$, and so
	\\
	
	$\#(\arfg( H))=c-1-\lceil \frac{\co(H)}{2} \rceil +1=c-\lceil \frac{\co(H)}{2} \rceil=\lfloor\frac{\co(H)}{2} \rfloor. $
\end{proof}
\section{ Arf-irreducible Numerical Semigroups}	
In this section, we introduce the concept of Arf-irreducible numerical semigroups. We obtain some results related to this concept.
\begin{definition} {\label{Def. 2}} An Arf numerical semigroup is called Arf-irreducible if it can not be expressed as the intersection of two Arf numerical semigroups containing it properly.	
\end{definition}

\begin{exam} {\label{example A}} The numerical semigroup $T =\{0,10,18,20,\rightarrow\}$ is an Arf numerical semigroup that is not an Arf-irreducible. Because $\{0,10,14,18,20,\rightarrow\}$ and  $\{0,10,16,18,20,\rightarrow\}$ are two Arf numerical semigroups properly containing $T$, and $T=\{0,10,14,18,20,\rightarrow\}\cap\{0,10,16,18,20,\rightarrow\}$.

\end{exam}
Recall that a numerical semigroup is irreducible if it can not be expressed as the intersection of two numerical semigroups properly containing it. Obviously, every irreducible Arf numerical semigroup is an Arf-irreducible. However, an Arf-irreducible numerical semigroup need not be an irreducible numerical semigroup (see Example {\ref{example B}} below).

Given an Arf numerical semigroup $H\neq \N$. It follows from Proposition {\ref{Pro. a}} that $H\bigcup \{ \fr(H)\} $ is an Arf numerical semigroup.

\begin{theorem} {\label{TEO. 10}} For an Arf numerical semigroup $H$, the following conditions are equivalent:
	\begin{enumerate}
		\item $H$ is an Arf-irreducible,
		\item $H$  is maximal in the set of all Arf numerical semigroups with Frobenius
		number  $\fr(H)$,
		\item $H$ is maximal in the set of all Arf numerical semigroups that do not contain
		$\fr(H)$.
	\end{enumerate}
\end{theorem}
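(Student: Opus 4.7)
I would prove the theorem by establishing the equivalence in the cycle $(1)\Rightarrow(3)\Rightarrow(2)\Rightarrow(1)$, using the fact stated just before the theorem that $H\cup\{\operatorname{F}(H)\}$ is again an Arf numerical semigroup.

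\textbf{Step 1: $(2)\Leftrightarrow(3)$, a short general observation.} For any numerical semigroup $H'$ with $H\subsetneq H'$ we automatically have $\operatorname{F}(H')\le\operatorname{F}(H)$, because every gap of $H'$ is a gap of $H$. Hence $\operatorname{F}(H')=\operatorname{F}(H)$ iff $\operatorname{F}(H)\notin H'$. From this both directions are immediate: if (3) holds and some Arf $H'\supsetneq H$ has $\operatorname{F}(H')=\operatorname{F}(H)$, then $\operatorname{F}(H)\notin H'$, contradicting maximality in (3), so (2) holds; conversely if (2) holds and some Arf $H'\supsetneq H$ avoids $\operatorname{F}(H)$, then $\operatorname{F}(H')=\operatorname{F}(H)$, contradicting (2), so (3) holds.

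\textbf{Step 2: $(3)\Rightarrow(1)$ by contrapositive.} Suppose $H$ is not Arf-irreducible, so $H=H_1\cap H_2$ with $H_1,H_2$ Arf numerical semigroups strictly containing $H$. If (3) holds, then by the maximality in (3) applied to $H_i\supsetneq H$ we obtain $\operatorname{F}(H)\in H_i$ for $i=1,2$; hence $\operatorname{F}(H)\in H_1\cap H_2=H$, contradicting that $\operatorname{F}(H)$ is a gap of $H$. So (3) forces (1); equivalently, $\neg(1)\Rightarrow\neg(3)$, and I really want the forward form $(3)\Rightarrow(1)$, which this establishes.

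\textbf{Step 3: $(1)\Rightarrow(3)$ by contrapositive, which is the one non-trivial step.} Suppose $H$ is \emph{not} maximal in the set of Arf numerical semigroups avoiding $\operatorname{F}(H)$: there is an Arf numerical semigroup $\overline{H}$ with $H\subsetneq\overline{H}$ and $\operatorname{F}(H)\notin\overline{H}$. I claim
\[
H=\overline{H}\cap\bigl(H\cup\{\operatorname{F}(H)\}\bigr).
\]
The inclusion $\subseteq$ is clear. For $\supseteq$, take $x$ in the right-hand side. Then either $x\in H$ (done) or $x=\operatorname{F}(H)$, but the latter would force $\operatorname{F}(H)\in\overline{H}$, contrary to choice. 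Since $H\cup\{\operatorname{F}(H)\}$ is Arf (recalled above) and strictly contains $H$, and $\overline{H}$ is Arf and strictly contains $H$, this expresses $H$ as the intersection of two Arf numerical semigroups properly containing it, so $H$ is not Arf-irreducible.

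\textbf{Anticipated difficulty.} The only delicate point is Step 3, which relies on having a canonical Arf "partner" for $\overline{H}$; the point is that the recalled fact $H\cup\{\operatorname{F}(H)\}\in\mathscr{A}(H)$ (from Proposition \ref{Pro. a}) is exactly what makes the construction work. Steps 1 and 2 are essentially formal. Combining Steps 1--3 gives the cycle $(1)\Rightarrow\text{(via Step 3 contrapositive)}\ (3)\Leftrightarrow(2)\Rightarrow(1)$ (using Step 2), yielding the full equivalence.
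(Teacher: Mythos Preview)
Your proof is correct and follows essentially the same approach as the paper: the key step in both is the decomposition $H=(H\cup\{\operatorname{F}(H)\})\cap K$ for an Arf oversemigroup $K$ not containing $\operatorname{F}(H)$, and your $(3)\Rightarrow(1)$ matches the paper's verbatim. The only minor difference is organizational: the paper runs the cycle $(1)\Rightarrow(2)\Rightarrow(3)\Rightarrow(1)$ and for $(2)\Rightarrow(3)$ builds the auxiliary semigroup $L=K\cup\{\operatorname{F}(H)+1,\rightarrow\}$, whereas your direct observation that $\operatorname{F}(H')=\operatorname{F}(H)\Leftrightarrow\operatorname{F}(H)\notin H'$ for $H'\supsetneq H$ gives $(2)\Leftrightarrow(3)$ without that construction.
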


\begin{proof}$(1)\Rightarrow (2)$ Let $K$ be  an Arf numerical semigroup such that $H\subseteq K$  and $\fr(H)=\fr(K)$. Then $H=(H\cup \{ \fr(H)\} )\cap K$. Since $H$ is an Arf-irreducible, we deduce that $H=K$.\\
	$(2)\Rightarrow (3)$ Let $K$ be  an Arf numerical semigroup such that $H\subseteq K$ and $\fr(H)\notin K$. Then $ L=K\cup\{ \fr(H)+1,\rightarrow \} $ is an Arf numerical semigroup such that $H\subseteq L$ and $\fr(H)=\fr(L)$. Therefore, $H=L=K\cup\{ \fr(H)+1,\rightarrow \} $, and so $H=K$.\\
	$(3)\Rightarrow (1)$ Let $H_1$ and $H_2$ be two Arf numerical semigroups that contain $H$ properly. Then by hypothesis, $\fr(H)\in H_1$  and $\fr(H)\in H_2$. Hence, $\fr(H)\in (H_1\cap H_2)\setminus H$; $H\neq H_1\cap H_2$. So $H$ is an Arf-irreducible.
\end{proof}
As a corollary, we find another characterization of Arf-irreducible numerical semigroup.
\begin{corollary} {\label{Cor. 11}} Let $H$ be an Arf numerical semigroup such that  $H\neq \N$. Then $H$is an Arf-irreducible if and only if 	$\#(\arfg( H))=1 $.
\end{corollary}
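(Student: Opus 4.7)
The plan is to derive this corollary by chaining the equivalence in Theorem \ref{TEO. 10} with Proposition \ref{Pro. 8}, applied to the single-element set $\{\fr(H)\}$, and then using the observation that $\fr(H)$ is always in $\arfg(H)$ when $H\neq\N$.

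First I would recall from the paragraph preceding Theorem \ref{TEO. 10} that $H\cup\{\fr(H)\}$ is an Arf numerical semigroup; combined with the standard fact that $\fr(H)\in\sg(H)$ (since $2\fr(H)$ is always in $H$), this gives $\fr(H)\in\arfg(H)$. In particular, $\#(\arfg(H))\geq 1$ whenever $H\neq\N$, so the content of the corollary is whether there are any other Arf special gaps.

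Next, by Theorem \ref{TEO. 10}, $H$ is Arf-irreducible if and only if $H$ is maximal (with respect to inclusion) among all Arf numerical semigroups that do not contain $\fr(H)$. I would then apply Proposition \ref{Pro. 8} with $t=1$ and $g_1=\fr(H)$, which tells us that this maximality is equivalent to the inclusion $\arfg(H)\subseteq\{\fr(H)\}$. Combined with $\fr(H)\in\arfg(H)$ from the first step, this inclusion forces the equality $\arfg(H)=\{\fr(H)\}$, i.e.\ $\#(\arfg(H))=1$. Conversely, if $\#(\arfg(H))=1$, then by the same observation its unique element must be $\fr(H)$, so $\arfg(H)=\{\fr(H)\}$ and Proposition \ref{Pro. 8} plus Theorem \ref{TEO. 10} yield that $H$ is Arf-irreducible.

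There is no real obstacle here — the corollary is essentially just the specialization of Proposition \ref{Pro. 8} to the singleton $\{\fr(H)\}$, once one notes that $\fr(H)$ itself is always an Arf special gap. The only subtle point to make explicit is that the singleton $\{g_1\}$ appearing in Proposition \ref{Pro. 8} can indeed be chosen to be $\{\fr(H)\}$ (which lies in $\sg(H)$), and that the maximality conditions in Proposition \ref{Pro. 8}(1) and in Theorem \ref{TEO. 10}(3) are literally the same statement for this choice.
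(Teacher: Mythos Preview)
Your proposal is correct and follows essentially the same route as the paper's proof: both invoke Theorem~\ref{TEO. 10}(3), then apply Proposition~\ref{Pro. 8} with the singleton $\{\fr(H)\}$ to obtain $\arfg(H)\subseteq\{\fr(H)\}$, and finish by noting $\fr(H)\in\arfg(H)$ whenever $H\neq\N$. Your write-up is simply more expansive in spelling out the converse direction and in justifying why $\fr(H)\in\sg(H)$, but the logical skeleton is identical.
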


\begin{proof} By Theorem {\ref{TEO. 10}}, $H$ is an Arf-irreducible numerical semigroup if and only if it is maximal in the set of Arf numerical semigroups whose intersection with $\{\fr(H)\}$ is empty. From Proposition {\ref{Pro. 8}}, this is the same as $\arfg( H) \subseteq\{ \fr(H) \}$. Clearly, $\fr(H)\in \arfg( H)$ whenever $H\neq \N$.
\end{proof}
\begin{exam} {\label{example B}} Consider the Arf numerical semigroup $H=\left\lbrace 0,10,17,20\rightarrow\right\rbrace $. It is easy to see that $\arfg( H)=\{19\}$. So $H$ is an Arf-irreducible. However, $H$ is not irreducible as $H_1=\{0,10,17,19\rightarrow\}$ and $H_2=\{0,10,17,18,20\rightarrow\}$ are numerical semigroups properly containing $H$ such that $H=H_1\cap H_2$.
	
Whether $H$is Arf-irreducible or irreducible is expressed in the \href{https://www.gap-system.org/}{ {\bf GAP}}  session with \href{https://gap-packages.github.io/numericalsgps}{ {\bf numericalsgps} } package as follows:
\begin{verbatim}
	gap> s:=NumericalSemigroupBySmallElements([0,10,17,20]);;
	gap> IsArfIrreducible(s);
	true
	gap> IsIrreducible(s);
	false
	\end{verbatim}
\end{exam}
Every symmetric or pseudo-symmetric numerical semigroup is irreducible. Irreducible numerical semigroups are either symmetric (when their Frobenius number is odd) or pseudo-symmetric (when their Frobenius number is even)\cite{Assi}.

\begin{theorem} \cite[Theorem  $1.4.2.$]{Barucci} {\label{simetrik arf}}  Let $H$ be a (numerical) semigroup. The numerical semigroup $H$ is symmetric and Arf if and only if $H=\langle 2,m\rangle$ with $m\geq1$ and $m$ odd.
\end{theorem}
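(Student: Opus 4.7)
Assume $H=\langle 2,m\rangle$ with $m\geq 1$ odd. If $m=1$ then $H=\N$, which is trivially symmetric and Arf. If $m\geq 3$ then $H=\{0,2,4,\dots,m-1\}\cup\{m,m+1,\ldots\}$ with $\fr(H)=m-2$; the gaps are the odd integers from $1$ to $m-2$, and for any such gap $x$ the number $(m-2)-x$ is a nonnegative even integer and hence lies in $H$, establishing symmetry. For the Arf condition, given $h_i\geq h_j$ in $H$ the integer $2h_i-h_j$ has the parity of $h_j$: when $h_j$ is even, $2h_i-h_j$ is a nonnegative even integer; when $h_j$ is odd we have $h_j\geq m$ and $2h_i-h_j\geq h_j\geq m$; either way $2h_i-h_j\in H$.

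\textbf{Forward direction: setup.} Suppose $H$ is symmetric and Arf. If $H=\N$ take $m=1$. Otherwise let $m=\m(H)\geq 2$. Since $H$ is Arf it has maximal embedding dimension $\e(H)=m$, so the minimal generating set of $H$ is exactly $\{m\}\cup(\ap(H,m)\setminus\{0\})$. Write $\ap(H,m)=\{0=w(0),w(1),\ldots,w(m-1)\}$, set $W=\max\ap(H,m)=\fr(H)+m$, and let $i_0\in\{1,\ldots,m-1\}$ be the index with $w(i_0)=W$. The goal is to show $m=2$; then $\ap(H,2)=\{0,w(1)\}$ with $w(1)$ odd and $\geq 3$ forces $H=\langle 2,w(1)\rangle$.

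\textbf{Key step and conclusion.} The central input is that for symmetric $H$ the map $\phi(w)=W-w$ is an involution of $\ap(H,m)$. I would first upgrade the paper's one-sided definition of symmetric to the two-sided equivalence $x\in H\iff \fr(H)-x\notin H$ (for $0\leq x\leq \fr(H)$): if both $x$ and $\fr(H)-x$ lay in $H$, their sum $\fr(H)$ would lie in $H$, contradicting $\fr(H)\notin H$; combined with the one-sided hypothesis, each such pair contains exactly one gap. With the two-sided form, for $w\in\ap(H,m)$ one verifies directly that $W-w\in H$ (from $w-m\notin H$) and $(W-w)-m=\fr(H)-w\notin H$ (from $w\in H$), so $\phi(w)\in\ap(H,m)$. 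Now suppose $m\geq 3$ toward a contradiction. Then $\{1,\ldots,m-1\}\setminus\{i_0\}$ has cardinality $m-2\geq 1$; pick any $i$ in it. Then $\phi(w(i))=w(j)$ with $j\neq 0$ (since $w(i)\neq W$), so $W=w(i)+w(j)$ expresses $W$ as a sum of two nonzero elements of $H$, making $W$ not a minimal generator. But by MED, $W=w(i_0)\in\ap(H,m)\setminus\{0\}$ is a minimal generator, a contradiction. Hence $m\leq 2$, completing the proof. The main obstacle is the Apéry-set symmetry step, which depends on the two-sided form of symmetric and is easy to overlook; once that is in hand, the collapse for $m\geq 3$ is an immediate pigeonhole against MED that becomes tight exactly at $m=3$.
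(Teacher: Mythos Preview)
The paper does not supply its own proof of this theorem; it is quoted verbatim as \cite[Theorem 1.4.2]{Barucci} and used as a black box. So there is no in-paper argument to compare against.

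Your argument is correct. The reverse direction is a routine verification. For the forward direction, your use of the Ap\'ery-set involution $w\mapsto W-w$ (valid precisely because $H$ is symmetric) together with maximal embedding dimension is sound: for $m\geq 3$ you exhibit $W$ as a sum of two nonzero elements of $H$, contradicting that $W=w(i_0)$ lies in the minimal generating set. One minor cosmetic point: you use $m$ for the multiplicity and then conclude $H=\langle 2,w(1)\rangle$, so the ``$m$'' in the theorem statement is your $w(1)$, not your $m=\m(H)$; this is clear from context but worth flagging.

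As an aside, there is an even shorter route once you have MED: a symmetric numerical semigroup has type $1$ (i.e., $\#\pf(H)=1$), while an MED semigroup with multiplicity $m$ has type $m-1$; hence $m-1=1$ and $m=2$ immediately. Your Ap\'ery-involution argument is essentially a hands-on unpacking of this type computation.
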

\begin{theorem} {\label{pseu-simetrik arf}}\cite[Theorem  $1.4.5.$]{Barucci} Let $H$ be a (numerical) semigroup. The numerical semigroup $H$ is pseudo-symmetric and Arf if and only if $H$ is either $\langle 3,4,5\rangle$ or $\langle 3,5,7\rangle$.
\end{theorem}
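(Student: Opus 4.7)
The plan is to split the proof into two directions. For the $(\Leftarrow)$ direction, I would directly check that $\langle 3,4,5\rangle=\{0,3,4,5,\rightarrow\}$ and $\langle 3,5,7\rangle=\{0,3,5,6,7,8,9,10,\rightarrow\}$ are each Arf (a finite verification of $2h_i-h_j\in H$ on the small elements) and pseudo-symmetric (Frobenius numbers $2$ and $4$; gap sets $\{1,2\}$ and $\{1,2,4\}$; in both cases $\fr(H)/2$ coincides with the single gap that is not of the form $\fr(H)-h$ for some $h\in H$).

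For the $(\Rightarrow)$ direction, I first plan to reduce to $\m(H)=3$ via a lemma about Arf numerical semigroups: if $H$ is Arf with multiplicity $m$ and $\ap(H,m)=\{0,w(1),\dots,w(m-1)\}$, then $\pf(H)=\{w(1)-m,\dots,w(m-1)-m\}$. The inclusion $\pf(H)\subseteq\{w(k)-m:1\leq k\leq m-1\}$ is immediate from the fact that $x\in\pf(H)$ forces $x+m\in H$ and $x\notin H$, hence $x+m\in\ap(H,m)$. For the reverse inclusion I would invoke the three-element form of the Arf identity $a+b-c\in H$ valid for $a\geq b\geq c$ in $H$: given any $h\in H\setminus\{0\}$ we have $h\geq m$, so applying the identity with $c=m$ and $\{a,b\}=\{h,w(k)\}$ (with the larger playing the role of $a$) yields $w(k)+h-m\in H$. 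Therefore $|\pf(H)|=m-1$; since a pseudo-symmetric semigroup has exactly two pseudo-Frobenius numbers, namely $\fr(H)$ and $\fr(H)/2$, this forces $\m(H)=3$.

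With $\m(H)=3$, the maximal embedding dimension property of Arf semigroups gives $H=\langle 3,w(1),w(2)\rangle$ where $w(1)\equiv 1$ and $w(2)\equiv 2\pmod 3$, both $\geq 3$. Then $\fr(H)=\max(w(1),w(2))-3$ and $\pf(H)=\{w(1)-3,w(2)-3\}$; the pseudo-symmetric identity $\pf(H)=\{\fr(H)/2,\fr(H)\}$ forces the smaller of $w(1)-3,\,w(2)-3$ to equal half of the larger, which simplifies to either $w(1)=2w(2)-3$ (if $w(1)>w(2)$) or $w(2)=2w(1)-3$ (if $w(2)>w(1)$).

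The last step, which is where I expect the main obstacle to lie, is using the Arf condition to rule out all but the smallest parameter value in each subcase. Since $6=3+3\in H$, Arf demands $2w(i)-6\in H$ whenever $w(i)\geq 6$. In the first subcase, if $w(2)\geq 8$, then $2w(2)-6\equiv 1\pmod 3$ while $2w(2)-6<2w(2)-3=w(1)$, contradicting the minimality of $w(1)$ in its residue class modulo $3$; hence $w(2)=5$, $w(1)=7$, and $H=\langle 3,5,7\rangle$. The second subcase is symmetric: if $w(1)\geq 7$, then $2w(1)-6\equiv 2\pmod 3$ is strictly less than $w(2)=2w(1)-3$, again impossible, leaving $w(1)=4$, $w(2)=5$, and $H=\langle 3,4,5\rangle$. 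The only technically delicate ingredient is the Ap\'ery-set description of $\pf(H)$ for Arf semigroups; once it is in hand, the remainder is a short modular arithmetic check.
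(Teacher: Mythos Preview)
The paper does not actually prove this theorem: it is quoted verbatim from \cite[Theorem 1.4.5]{Barucci} and used only as input to Corollary~\ref{arf irreducible-irreducible}. So there is no in-paper argument to compare against.

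Your proposal is a correct self-contained proof. The key step---showing that an Arf semigroup of multiplicity $m$ has exactly $m-1$ pseudo-Frobenius numbers, namely $w(k)-m$ for $1\le k\le m-1$---is valid, and your justification via the three-term Arf identity $a+b-c\in H$ (taking $c=m$) is exactly right; this is in fact the standard argument that every Arf (or more generally maximal embedding dimension) semigroup has type $m-1$. From there the reduction to $\m(H)=3$ and the two subcases are handled cleanly: the modular obstruction $2w(i)-6\equiv w(j)\pmod 3$ with $2w(i)-6<w(j)$ correctly rules out all parameter values beyond the minimal ones, yielding precisely $\langle 3,4,5\rangle$ and $\langle 3,5,7\rangle$.

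One cosmetic remark: in the $(\Leftarrow)$ direction you write $\langle 3,5,7\rangle=\{0,3,5,6,7,8,9,10,\rightarrow\}$; the conductor is $5$, so $\{0,3,5,\rightarrow\}$ suffices. This does not affect correctness.
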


\begin{corollary}{\label{arf irreducible-irreducible}} Let $H$ be an Arf semigroup. The numerical semigroup $H$ is both irreducible and Arf-irreducible if and only if $H$ is is one of $\langle 3,4,5\rangle$ or $\langle 3,5,7\rangle$ or $\langle 2,m\rangle$ with $m\geq1$ and $m$ odd.
\end{corollary}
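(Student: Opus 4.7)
The plan is to observe that for an Arf numerical semigroup $H$, being both irreducible and Arf-irreducible reduces to being irreducible, because the paper has already remarked that every irreducible Arf numerical semigroup is automatically Arf-irreducible. Thus the corollary is equivalent to classifying the irreducible Arf numerical semigroups, and the two key ingredients are the standard dichotomy that every irreducible numerical semigroup is symmetric or pseudo-symmetric (according to the parity of its Frobenius number), together with Theorems \ref{simetrik arf} and \ref{pseu-simetrik arf} which pin down the Arf semigroups in each of those two classes.

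For the forward direction, I would assume $H$ is Arf, irreducible, and Arf-irreducible, and split on the parity of $\fr(H)$. If $\fr(H)$ is odd, irreducibility forces $H$ to be symmetric; combined with the Arf hypothesis, Theorem \ref{simetrik arf} yields $H=\langle 2,m\rangle$ for some odd $m\geq 1$. If $\fr(H)$ is even, irreducibility forces $H$ to be pseudo-symmetric, and Theorem \ref{pseu-simetrik arf} restricts $H$ to $\langle 3,4,5\rangle$ or $\langle 3,5,7\rangle$. (The trivial case $H=\N$ is not irreducible, so it is excluded.)

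For the converse, I would take each semigroup on the listed side and argue separately. Each $\langle 2,m\rangle$ with $m$ odd is symmetric and Arf by Theorem \ref{simetrik arf}, hence irreducible; each of $\langle 3,4,5\rangle$ and $\langle 3,5,7\rangle$ is pseudo-symmetric and Arf by Theorem \ref{pseu-simetrik arf}, hence irreducible. In all three cases the semigroup is Arf and irreducible, so by the observation recalled after Definition \ref{Def. 2} it is also Arf-irreducible.

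There is no genuine obstacle here: the statement is essentially a bookkeeping consequence of the three results invoked. The only point that deserves a short sentence of justification is the implication ``irreducible Arf $\Rightarrow$ Arf-irreducible'' (already made explicit in the paper), so that no separate check of Arf-irreducibility via Corollary \ref{Cor. 11} is required, although one could alternatively verify directly in each of the three families that $\#(\arfg(H))=1$ to re-derive Arf-irreducibility from Corollary \ref{Cor. 11}.
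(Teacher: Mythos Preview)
Your argument is correct and follows essentially the same route as the paper, which simply cites Corollary~\ref{Cor. 11}, Theorem~\ref{simetrik arf}, and Theorem~\ref{pseu-simetrik arf}; you spell out the symmetric/pseudo-symmetric dichotomy and use the remark after Definition~\ref{Def. 2} in place of Corollary~\ref{Cor. 11} for the converse, which is equally valid. One small slip: your parenthetical that $\N$ is ``not irreducible'' is incorrect (it is vacuously irreducible and Arf-irreducible), but this is harmless since $\N=\langle 2,1\rangle$ is already on the list.
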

	\begin{proof} Follows from Corollary \ref{Cor. 11}, Theorem \ref{simetrik arf}, and Theorem \ref{pseu-simetrik arf}.
	\end{proof}

In Section \ref{sec 3}, we have obtained a procedure to construct the set $\mathrsfso{A}(H)$ of all Arf numerical semigroups containing a given Arf numerical semigroup $H$. While performing this procedure we can distinguish those over semigroups with at most one Arf special gap, which in view of Corrolary \ref{Cor. 11} are those Arf-irreducible over Arf numerical semigroup of $H$.
		\begin{proposition} {\label{Pro. 13}} Every Arf numerical semigroup can be expressed as an intersection of Arf-irreducible numerical semigroups.
		\end{proposition}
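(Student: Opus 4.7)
The plan is to produce, for each gap $x$ of $H$, an Arf-irreducible Arf numerical semigroup $S_x$ with $H\subseteq S_x$ and $x\notin S_x$, and then to check that $H=\bigcap_{x\in\G(H)}S_x$. The case $H=\N$ is handled separately: with the convention that $\N$ is itself Arf-irreducible, since no Arf numerical semigroup properly contains it, the claim is vacuous.

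Assume from now on that $H\neq\N$ and fix $x\in\G(H)$. I consider the family $\mathcal{F}_x=\{S\in\mathrsfso{A}(H):x\notin S\}$. Because $\mathrsfso{A}(H)$ is finite and $H\in\mathcal{F}_x$, the family is non-empty and finite, so it admits a maximal element $S_x$ with respect to inclusion. I expect $S_x$ to be Arf-irreducible, and to confirm this I would apply Corollary \ref{Cor. 11}. Pick any $y\in\arfg(S_x)$. By Definition \ref{Def. 1}, $S_x\cup\{y\}$ is an Arf numerical semigroup properly containing $S_x$, and hence contains $H$. Maximality of $S_x$ in $\mathcal{F}_x$ then forces $x\in S_x\cup\{y\}$, and since $x\notin S_x$ this gives $y=x$. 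Thus $\arfg(S_x)\subseteq\{x\}$. Since $x\notin S_x$ yields $S_x\neq\N$, the observation preceding Theorem \ref{TEO. 10} gives $\fr(S_x)\in\arfg(S_x)$, so $\arfg(S_x)=\{\fr(S_x)\}=\{x\}$, and by Corollary \ref{Cor. 11} the semigroup $S_x$ is Arf-irreducible.

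It remains to check $H=\bigcap_{x\in\G(H)}S_x$. The inclusion $\subseteq$ is immediate from $H\subseteq S_x$ for every $x$. Conversely, any $y$ in the intersection not lying in $H$ would be an element of $\G(H)$; but by construction $y\notin S_y$, a contradiction. Hence $H$ is the intersection of finitely many Arf-irreducible Arf numerical semigroups, as required.

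The main delicate point is the inclusion $\arfg(S_x)\subseteq\{x\}$: it uses in an essential way that adjoining an Arf special gap to an Arf numerical semigroup produces a strictly larger Arf numerical semigroup differing from the original in exactly one element, so the maximality of $S_x$ among Arf semigroups avoiding $x$ leaves no room for any Arf special gap other than $x$ itself. The rest of the argument is a combination of a straightforward finiteness step (to extract $S_x$ from $\mathrsfso{A}(H)$) and an immediate set-theoretic verification of the intersection equality.
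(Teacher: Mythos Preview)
Your argument is correct, but it follows a genuinely different route from the paper's. The paper argues by a top-down recursion: if $H$ is not Arf-irreducible, write $H=H_1\cap H_2$ with $H_1,H_2$ Arf and properly containing $H$, then repeat on each factor; termination is guaranteed because every semigroup arising lies in the finite set $\mathrsfso{A}(H)$. Your proof, by contrast, is a direct construction: for each gap $x$ you pick a maximal Arf over-semigroup $S_x$ avoiding $x$, show via Corollary~\ref{Cor. 11} that $S_x$ is Arf-irreducible (in fact with $\fr(S_x)=x$), and then verify $H=\bigcap_{x\in\G(H)}S_x$. Your approach is more explicit---it exhibits the Arf-irreducible components concretely and ties in nicely with the maximality characterization of Theorem~\ref{TEO. 10}---while the paper's recursive argument is shorter but leaves the termination step slightly implicit and gives no direct description of the resulting components.
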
 
		\begin{proof}
			Let $H$ be an Arf numerical semigroup. If  $H$  is not an Arf-irreducible, then there exists  Arf numerical semigroups $H_1$  and $H_2$ properly containing $H$ such that $H_1\cap H_2=H$. If $H_1$  and $H_2$ are not Arf-irreducible numerical semigroups, we can write each of them as an intersection of two other Arf numerical semigroups properly containing it. Since every Arf numerical semigroup appearing in this procedure belongs to $\mathrsfso{A}(H)$ and $\mathrsfso{A}(H)$ has finite number of elements, this process can be repeated only finitely many times.
		\end{proof}
		\begin{remark}For any Arf numerical semigroup $H$, let
			$$	\mathrsfso{QA}(H)=\left\lbrace \overline{H}\in\mathrsfso{A}(H): \overline{H} \textrm{ is an Arf-irreducible} \right\rbrace. $$
			By Proposition 	{\ref{Pro. 13}},
			
			$$H=\bigcap_{\overline{H}\in \mathrsfso{QA}(H) }\overline{H}.$$
			From this intersection, numerical semigroups that are not minimal with respect to set inclusion can be removed. The resulting intersection is still equal to $H$. Thus we have Proposition {\ref{Pro. 14}},
		\end{remark}
		
		\begin{proposition} {\label{Pro. 14}} Let $H$ be an Arf numerical semigroup and $\mathrsfso{QA}(H)$ be defined
			as given before. Assume that 	$ Minimals_{\subseteq}(\mathrsfso{QA}(H))=\{H_1,\dots,H_r \}$. Then 
			$$H=H_1\cap\dots\cap H_r. $$ 
		\end{proposition}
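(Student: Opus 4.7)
The plan is to prove the two inclusions $H \subseteq H_1 \cap \dots \cap H_r$ and $H_1 \cap \dots \cap H_r \subseteq H$ separately, leveraging the full intersection formula $H = \bigcap_{\overline{H} \in \mathrsfso{QA}(H)} \overline{H}$ established in the remark right before this proposition (which is itself a direct consequence of Proposition~3.13).

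First I would observe that each $H_i$ is an element of $\mathrsfso{QA}(H) \subseteq \mathrsfso{A}(H)$, so by definition of $\mathrsfso{A}(H)$ we have $H \subseteq H_i$ for every $i \in \{1,\dots,r\}$, and therefore $H \subseteq H_1 \cap \dots \cap H_r$. This direction is immediate from the setup.

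For the reverse inclusion, the key point is that $\mathrsfso{A}(H)$ is finite (since $\N \setminus H$ is finite), hence $\mathrsfso{QA}(H)$ is finite, so every element of $\mathrsfso{QA}(H)$ contains a minimal element of $\mathrsfso{QA}(H)$ with respect to inclusion: given any $\overline{H} \in \mathrsfso{QA}(H)$, by descending through a chain of strictly smaller members of $\mathrsfso{QA}(H)$ (which must terminate by finiteness), we arrive at some $H_i \in \mathrm{Minimals}_{\subseteq}(\mathrsfso{QA}(H))$ with $H_i \subseteq \overline{H}$. Consequently $H_1 \cap \dots \cap H_r \subseteq H_i \subseteq \overline{H}$ for every $\overline{H} \in \mathrsfso{QA}(H)$, and taking the intersection over all such $\overline{H}$ together with the remark preceding the proposition yields
$$H_1 \cap \dots \cap H_r \;\subseteq\; \bigcap_{\overline{H} \in \mathrsfso{QA}(H)} \overline{H} \;=\; H.$$

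Combining the two inclusions gives the claimed equality. There is no serious obstacle here: the argument is a routine application of the fact that in a finite poset every element dominates a minimal one, and the substance of the result has already been done in Proposition~3.13. The only thing one should be slightly careful about is verifying that $\mathrsfso{QA}(H)$ is nonempty (so that the list $H_1,\dots,H_r$ makes sense), which holds because $\N \in \mathrsfso{QA}(H)$ whenever $H \neq \N$, and if $H = \N$ then $H$ itself is vacuously Arf-irreducible and lies in $\mathrsfso{QA}(H)$.
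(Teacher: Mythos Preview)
Your proof is correct and follows essentially the same approach as the paper: the paper does not give a separate proof of this proposition but derives it directly from the preceding remark, observing that non-minimal members of $\mathrsfso{QA}(H)$ can be dropped from the intersection $\bigcap_{\overline{H}\in\mathrsfso{QA}(H)}\overline{H}=H$ without changing it. Your argument makes this explicit (each $\overline{H}\in\mathrsfso{QA}(H)$ contains some minimal $H_i$, so the intersection over the minimals is already contained in $H$), which is exactly the content of that remark.
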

		
		\begin{exam}{\label{exam. 2}} The semigroup $H=\langle 6,9,11,13,14,16\rangle=\{0,6,9,11,\rightarrow\}$ appearing in Example {\ref{exam. 1}} is an Arf semigroup. Let us verify Proposition {\ref{Pro. 14}} for this semigroup. First we compute the set 
			
			$\mathrsfso{QA}(H)=\{H\cup\{3\}, H\cup\{3,10\},H\cup\{3,8,10\},H\cup\{3,7,8,10\},H\cup\{3,5,7,8,10\},H\cup\{3,4,5,7,8,10\},H\cup\{2,4,8,10\}, H\cup\{2,4,7,8,10\},H\cup\{2,4,5,7,8,10\},H\cup\{2,3,4,5,7,8,10\},H\cup\{1,2,3,4,5,7,8,10\}\}.$	
			
			Moreover,
			$$ Minimals_{\subseteq}(\mathrsfso{QA}(H))=\{H\cup\{3\}, H\cup\{2,4,8,10\}\}.$$
			
			Finally,
			$$H=\langle 6,9,11,13,14,16\rangle=(H\cup\{3\})\cap(H\cup\{2,4,8,10\})=\langle 3,11,13\rangle\cap\langle 2,9\rangle.$$ 
			
			The operations given above can be found in  the \href{https://www.gap-system.org/}{ {\bf GAP}}  session with \href{https://gap-packages.github.io/numericalsgps}{ {\bf numericalsgps} } package as follows:
			\begin{verbatim}
			gap> s:=NumericalSemigroup(6,9,11,13,14,16);;
			gap> li:=DecomposeIntoArfIrreducibles(s);
			[ <Numerical semigroup with 2 generators>,
			 <Numerical semigroup with 3 generators> ]
			gap> List(li,MinimalGenerators);
			[ [ 2, 9 ], [ 3, 11, 13 ] ]
			gap> ForAll(li, IsArfIrreducible);
			true
			gap> Intersection(li)=s;
			true
			\end{verbatim}
		\end{exam}
		\begin{proposition} {\label{Pro. 15}} Let $H$ be an Arf numerical semigroup. If $H=H_1\cap\dots\cap H_r$ with $H_1,\dots, H_r\in \mathrsfso{QA}(H)$, then there exists $ \overline{H_1},\dots, \overline{H_r}\in Minimals_{\subseteq}(\mathrsfso{QA}(H))$ such that $H= \overline{H_1}\cap \dots \cap \overline{H_r}$.
		\end{proposition}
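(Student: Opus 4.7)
The plan is to refine the given decomposition one factor at a time by replacing each $H_i$ with a minimal element of $\mathrsfso{QA}(H)$ that sits below it. Since $\mathrsfso{A}(H)$ is finite (as noted in the discussion before Example \ref{exam. 1}, because $\N\setminus H$ is finite), the subset $\mathrsfso{QA}(H)$ is also finite. Finiteness gives the descending chain condition on $\mathrsfso{QA}(H)$ with respect to inclusion, so for each $i\in\{1,\dots,r\}$ we can pick some $\overline{H_i}\in \mathrsfso{QA}(H)$ such that $\overline{H_i}\subseteq H_i$ and $\overline{H_i}$ is minimal in $\mathrsfso{QA}(H)$; concretely, starting from $H_i$ and repeatedly replacing the current element with a strictly smaller one in $\mathrsfso{QA}(H)$ whenever possible, the process must terminate, yielding such an $\overline{H_i}\in Minimals_{\subseteq}(\mathrsfso{QA}(H))$.

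Next I would verify the two inclusions. On the one hand, every $\overline{H_i}$ lies in $\mathrsfso{QA}(H)\subseteq \mathrsfso{A}(H)$, so $H\subseteq \overline{H_i}$ for all $i$; hence $H\subseteq \overline{H_1}\cap\dots\cap \overline{H_r}$. On the other hand, $\overline{H_i}\subseteq H_i$ for each $i$ by construction, so
$$\overline{H_1}\cap\dots\cap \overline{H_r}\subseteq H_1\cap\dots\cap H_r = H.$$
Combining these inclusions gives the desired equality $H=\overline{H_1}\cap\dots\cap \overline{H_r}$.

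The only subtle point is justifying the existence of the minimal $\overline{H_i}$ below $H_i$, which is precisely where the finiteness of $\mathrsfso{A}(H)$ is used; without it, one could imagine an infinite descending chain inside $\mathrsfso{QA}(H)$ below $H_i$ that never reaches a minimal element. Since this finiteness has already been established, there is no real obstacle, and the proof is essentially a one-line application of Proposition \ref{Pro. 13} together with the descending chain argument.
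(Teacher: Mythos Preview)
Your proof is correct and follows the same approach as the paper: for each $i$, replace $H_i$ by a minimal element $\overline{H_i}\in Minimals_{\subseteq}(\mathrsfso{QA}(H))$ with $\overline{H_i}\subseteq H_i$, and then the two inclusions are immediate. You have simply spelled out more of the details (finiteness of $\mathrsfso{QA}(H)$ to guarantee minimals exist, and the verification of both containments), while the paper's proof compresses this to a single sentence.
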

		\begin{proof} 
			For every $i\in \{1,\dots,r\} $, if $H_i\notin Minimals_{\subseteq}(\mathrsfso{QA}(H))$, then take $\overline{H_i}\in Minimals_{\subseteq}(\mathrsfso{QA}(H))$ such that $\overline{H_i}\subset H_i$.
		\end{proof}
		
		\begin{proposition} {\label{Pro. 16}} Let $H$ be an Arf numerical semigroup. If $H_1\cap\dots\cap H_r\in\mathrsfso{A}(H) $. The following are equivalent.
			\begin{enumerate}
				\item $H=H_1\cap\dots\cap H_r $,
				\item For all $x\in \arfg( H)$, there exist $i\in \{1,\dots,r\} $ such that $x\notin H_i$.
			\end{enumerate}
		\end{proposition}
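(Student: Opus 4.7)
The plan is to prove the biconditional by establishing the two implications separately, with the nontrivial direction resting on Proposition \ref{Pro. 7}.

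The direction $(1) \Rightarrow (2)$ should be essentially immediate. If $x \in \arfg(H)$ then in particular $x \in \G(H)$, i.e.\ $x \notin H$. Since $H = H_1 \cap \dots \cap H_r$ by assumption, $x$ cannot lie in every $H_i$, so some index $i$ witnesses $x \notin H_i$. No deep input is needed here; this part is a straight unpacking of definitions.

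For $(2) \Rightarrow (1)$, I would write $K := H_1 \cap \dots \cap H_r$. The hypothesis $K \in \mathrsfso{A}(H)$ gives that $K$ is an Arf numerical semigroup with $H \subseteq K$, so the only thing to check is the reverse inclusion $K \subseteq H$. I would argue by contradiction: if $H \subsetneq K$, then $K \setminus H \neq \varnothing$, and Proposition \ref{Pro. 7} forces $x := \max(K \setminus H) \in \arfg(H)$. Applying hypothesis (2) to this $x$ yields some index $i$ with $x \notin H_i$; but then $x \notin K$, contradicting $x \in K \setminus H$.

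The crucial ingredient — and the only place where something nontrivial is invoked — is Proposition \ref{Pro. 7}, which asserts that the top element of any proper Arf extension of $H$ must be an Arf special gap. This is also why the statement is phrased with the assumption $H_1 \cap \dots \cap H_r \in \mathrsfso{A}(H)$ built in: we need $K$ itself to be an Arf numerical semigroup (not just a numerical semigroup) so that Proposition \ref{Pro. 7} is applicable. I do not foresee any genuine obstacle beyond correctly identifying that the maximality argument of Proposition \ref{Pro. 7} is exactly the right tool to convert a hypothetical gap between $H$ and $K$ into an Arf special gap whose presence in every $H_i$ would contradict hypothesis (2).
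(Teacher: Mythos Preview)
Your proposal is correct and matches the paper's own proof essentially line for line: the forward direction is the immediate observation that an Arf special gap is a gap of $H$ and hence omitted by some $H_i$, and the reverse direction is the contradiction argument via Proposition~\ref{Pro. 7} applied to $\max\bigl((H_1\cap\dots\cap H_r)\setminus H\bigr)$. Your additional remark explaining why the hypothesis $H_1\cap\dots\cap H_r\in\mathrsfso{A}(H)$ is needed (so that Proposition~\ref{Pro. 7} applies) is a nice clarification not made explicit in the paper.
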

		
		\begin{proof}$(1)\Rightarrow (2)$ Assume  that $H=H_1\cap\dots\cap H_r $. If $x\in \arfg( H)$, then  $x\notin H$. Consequently, $H$, $x\notin H_i$ for some $i\in \{1,\dots,r\}$.
			
			$(2)\Rightarrow (1)$ If $H\subsetneq H_1\cap\dots\cap H_r $, then from Proposition {\ref{Pro. 7}} and Definition {\ref{Def. 1}}, $x=\max [(H_1\cap\dots\cap H_r)\setminus H]$ is in $\arfg( H)$. Hence, $x\in H_1\cap\dots\cap H_r$, that is, $x$ is in all the  $H_i$. This contradicts the hypothesis.
		\end{proof}
	Proposition \ref{Pro. 16} is a special form of the Proposition 25 in \cite{Ros10} for Arf special gaps 
	\begin{definition}
	A nonempty family $\Psi$ of numerical semigroups is a variety  if the following holds:
	\begin{enumerate}
		\item if $H\in\Psi$ and $H\subseteq H’$, then $H’\in\Psi$,
		\item if $H,H’\in\Psi$ , then $H\cap H’\in\Psi$.
	\end{enumerate}
\end{definition}
\begin{definition}
	
	A nonempty family $\Psi$ of numerical semigroups is a Frobenius variety  if the following holds:
	\begin{enumerate}
		\item if $H,H’\in\Psi$ , then $H\cap H’\in\Psi$.
		\item if $H\in\Psi$ and $H\neq \N$, then $H\cup \left\lbrace \fr(H)\right\rbrace\in \Psi$,
	\end{enumerate}
\end{definition}

We denote by $\mathfrak{A}$ and $\mathfrak{S}$ the set of all Arf numerical semigroups and the set of all numerical semigroups, respectively.
	
	\begin{proposition}{\label{prop arf}} \cite[Chapter 6]{Ros09} The set $\mathfrak{A}$ is a Frobenius variety.
	\end{proposition}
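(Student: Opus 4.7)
The plan is to verify the two defining conditions of a Frobenius variety directly, since both conditions for $\mathfrak{A}$ turn out to be immediate consequences of results already established in the paper.

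For the intersection condition, I would take Arf numerical semigroups $H, H' \in \mathfrak{A}$. First observe that $H \cap H'$ is a numerical semigroup: it contains $0$, it is closed under addition (as both factors are), and its complement in $\N$ is finite (bounded by $\max\{\fr(H),\fr(H')\}+1$). To check the Arf property, let $a, b \in H \cap H'$ with $a \geq b$. Since $H$ is Arf, $2a - b \in H$; since $H'$ is Arf, $2a - b \in H'$; hence $2a-b \in H \cap H'$. This gives condition (1).

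For condition (2), let $H \in \mathfrak{A}$ with $H \neq \N$ and set $x = \fr(H)$. By Definition \ref{Def. 1}, it suffices to show $x \in \arfg(H)$, which I would verify via Proposition \ref{Pro. a}. Let $h_i$ denote the largest element of $H$ strictly below $\fr(H)$, so $h_{i+1} = \co(H) = \fr(H)+1$. I need to check three conditions:
\begin{enumerate}
\item[(a)] $\fr(H) \in \sg(H)$: by definition $\fr(H) \in \pf(H)$, and $2\fr(H) > \fr(H) \geq \co(H) - 1$, so $2\fr(H) \in H$.
\item[(b)] $2\fr(H) - h_i \in H$: since $h_i < \fr(H)$, this quantity exceeds $\fr(H)$, hence is at least $\co(H)$ and so lies in $H$.
\item[(c)] $2h_{i+1} - \fr(H) = \fr(H)+2 \in H$, again since it exceeds $\fr(H)$.
\end{enumerate}
Proposition \ref{Pro. a} then gives $\fr(H) \in \arfg(H)$, so $H \cup \{\fr(H)\}$ is an Arf numerical semigroup, completing the verification of condition (2).

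There is no substantive obstacle here: the proof is a direct application of the machinery already developed. The only thing that requires a moment of care is identifying the index $i$ in Proposition \ref{Pro. a} when $x = \fr(H)$ and recognizing that $h_{i+1}$ is precisely the conductor, which makes both arithmetic checks (b) and (c) trivial since both quantities end up strictly larger than $\fr(H)$.
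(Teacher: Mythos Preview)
Your argument is correct. Note, however, that the paper does not actually prove this proposition: it is stated with a citation to \cite[Chapter 6]{Ros09} and no proof is given. That said, your approach aligns exactly with what the paper uses implicitly elsewhere: the intersection property is precisely Proposition~\ref{prop arf2} (also cited without proof), and the fact that $H\cup\{\fr(H)\}$ is Arf is observed just before Theorem~\ref{TEO. 10} as a consequence of Proposition~\ref{Pro. a}, which is exactly the route you take for condition~(2).
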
 
	\begin{proposition} {\label{prop arf2}}\cite[Chapter 2]{Ros09} The intersection of finitely many Arf numerical semigroups is an Arf numerical semigroup.
	\end{proposition}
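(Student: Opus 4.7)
The plan is to reduce to the case of two Arf numerical semigroups by a straightforward induction on the number of semigroups intersected, and then check the two defining properties directly from the characterizations given in the introduction.

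First I would handle the base case: let $H_1$ and $H_2$ be Arf numerical semigroups and set $H = H_1 \cap H_2$. To show $H$ is a numerical semigroup, note $0 \in H_1 \cap H_2$, closure under addition is inherited from $H_1$ and $H_2$ (if $a,b \in H$, then $a,b$ lie in each $H_i$, hence $a+b \in H_i$ for $i=1,2$, so $a+b \in H$), and cofiniteness follows from De Morgan:
$$ \N \setminus (H_1 \cap H_2) = (\N \setminus H_1) \cup (\N \setminus H_2), $$
which is a finite union of finite sets.

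Next I would verify the Arf property, using the equivalent formulation stated in the introduction: $H$ is Arf iff $2x - y \in H$ for all $x, y \in H$ with $x \geq y$. Take such $x, y \in H_1 \cap H_2$. Since $H_1$ is Arf and $x, y \in H_1$, we have $2x-y \in H_1$; likewise $2x-y \in H_2$; therefore $2x-y \in H_1 \cap H_2 = H$.

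Finally, for an intersection of $n$ Arf numerical semigroups $H_1, \dots, H_n$, I would induct on $n$. The case $n=1$ is trivial, and the inductive step applies the base case to the Arf numerical semigroup $H_1 \cap \cdots \cap H_{n-1}$ (Arf by the inductive hypothesis) and $H_n$. There is no serious obstacle here: the statement is essentially a direct verification that both the semigroup axioms and the two-variable Arf condition are preserved under taking intersections, so the only thing to be careful about is using the $2x - y$ formulation rather than the original three-element formulation, which keeps the computation to a single line.
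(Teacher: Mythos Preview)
Your argument is correct: closure under the $2x-y$ operation is trivially inherited by intersections, cofiniteness follows from De Morgan, and induction takes care of the finite case. Note that the paper does not actually supply its own proof of this proposition---it is quoted from \cite[Chapter 2]{Ros09}---so there is nothing to compare against; your verification is exactly the standard one.
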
 
	Given $H$ is an Arf numerical semigroup , define
	$$\mathrsfso{A}(H)=\left\lbrace S\in \mathfrak{A}:H\subseteq S \right\rbrace.$$
	\begin{theorem}{\label{thm arf}} Let $H$ be an Arf numerical semigroup. The set of all Arf numerical semigroups containing $H$ is a Frobenius variety.
	\end{theorem}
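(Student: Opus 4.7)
The plan is to verify the two defining axioms of a Frobenius variety for $\mathrsfso{A}(H)$ directly, using the two propositions already cited just above the theorem. Both conditions will reduce to observing that the operations involved (finite intersection and adjoining the Frobenius number) preserve membership in $\mathfrak{A}$, and separately that they preserve the property of containing $H$.

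First I would check closure under intersection. Take $S_1, S_2 \in \mathrsfso{A}(H)$, so that $S_1, S_2 \in \mathfrak{A}$ and $H \subseteq S_1$, $H \subseteq S_2$. By Proposition \ref{prop arf2}, $S_1 \cap S_2$ is again an Arf numerical semigroup, and clearly $H \subseteq S_1 \cap S_2$, so $S_1 \cap S_2 \in \mathrsfso{A}(H)$.

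Next I would check the Frobenius-closure condition. Let $S \in \mathrsfso{A}(H)$ with $S \neq \N$. By Proposition \ref{prop arf}, $\mathfrak{A}$ is a Frobenius variety, so in particular $S \cup \{\fr(S)\} \in \mathfrak{A}$. Since $H \subseteq S \subseteq S \cup \{\fr(S)\}$, we obtain $S \cup \{\fr(S)\} \in \mathrsfso{A}(H)$. Together with the previous paragraph this shows both axioms, completing the proof.

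There is essentially no obstacle here: the work has been front-loaded into Propositions \ref{prop arf} and \ref{prop arf2}, and the containment condition $H \subseteq -$ is trivially preserved under both intersection (since $H$ sits inside both operands) and under enlarging a semigroup by one element. The only minor point worth stating explicitly is nonemptiness of $\mathrsfso{A}(H)$, which is immediate because $H \in \mathrsfso{A}(H)$; this is needed since the definition of a Frobenius variety requires the family to be nonempty.
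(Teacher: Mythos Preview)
Your proposal is correct and matches the paper's own proof essentially verbatim: both verify the two Frobenius-variety axioms for $\mathrsfso{A}(H)$ by invoking Proposition~\ref{prop arf2} for closure under intersection and Proposition~\ref{prop arf} (that $\mathfrak{A}$ itself is a Frobenius variety) for the Frobenius-number axiom, together with the trivial observation that $H$ remains contained after each operation. Your explicit remark on nonemptiness is a small addition the paper omits, but otherwise the arguments coincide.
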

	\begin{proof}  Assume that $H$ is an Arf numerical semigroup and  $\mathrsfso{A}(H)$ is the set of all Arf numerical semigroups containing $H$.
		\begin{enumerate}
			\item If $S,S’\in\mathrsfso{A}(H)$, then  $S,S’ $ are Arf numerical semigroups such that $H\subseteq S$ and $H\subseteq S’$. Clearly, $H\subseteq S\cap S’$. By Proposition {\ref{prop arf2}},  $S\cap S’$ is an Arf numerical semigroup. So, $S\cap S’\in\mathrsfso{A}(H)$.
			\item If $S\in\mathrsfso{A}(H)$  and $S\neq \N$, then $S$ is an Arf numerical semigroup such that $H\subseteq S$. $S\cup \left\lbrace \fr(S)\right\rbrace$ is known that Arf. Since $H\subseteq S\subseteq S\cup \left\lbrace \fr(S)\right\rbrace $, 	 $S\cup \left\lbrace \fr(S)\right\rbrace\in\mathrsfso{A}(H)$.
		\end{enumerate}
		Thus,  $\mathrsfso{A}(H)$ is a Frobenius variety.
	\end{proof}
	Given a family of numerical semigroups $\mathscr{F}$, we denote by
	
	$$\mathfrak{B}(\mathscr{F})=\bigcap_{S\in\mathscr{F}} S$$
	
	(if $\mathscr{F}$ is empty, just set $\mathfrak{B}(\mathscr{F})=\N$).
	Observe that $\mathfrak{B}(\mathscr{F})$ is a submonoid
	of $\N$, which in general does not have to be a numerical semigroup. However, if $\mathscr{F}$ has finitely many elements, then $\mathfrak{B}(\mathscr{F})$ is a numerical semigroup (since the intersection is clearly a monoid, and its complement in $\N$ is still
	finite; note that the Frobenius number of the resulting semigroup is the maximum of the Frobenius numbers of the semigroups involved in the intersection).
	
	\begin{lemma} {\label{lem arf1}}\cite[Lemma $6$]{Ros10} The intersection of varieties is a variety.
	\end{lemma}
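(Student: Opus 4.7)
The plan is to fix an arbitrary family $\{\Psi_i\}_{i\in I}$ of varieties of numerical semigroups, set $\Psi = \bigcap_{i\in I}\Psi_i$, and verify the three conditions in the definition of a variety: nonemptiness, upward closure under inclusion, and closure under pairwise intersection.

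First I would handle nonemptiness, since this is the only condition that requires more than routine bookkeeping. For each $i \in I$, the set $\Psi_i$ is nonempty by hypothesis, so I pick any $H \in \Psi_i$. From $H \subseteq \N$ and axiom (1) of a variety applied inside $\Psi_i$, we get $\N \in \Psi_i$. Since this holds for every $i \in I$, we conclude $\N \in \Psi$, so $\Psi$ is nonempty.

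Next, axioms (1) and (2) for $\Psi$ follow by a pointwise argument. If $H \in \Psi$ and $H \subseteq H'$, then $H \in \Psi_i$ for every $i \in I$, and applying axiom (1) inside each variety $\Psi_i$ yields $H' \in \Psi_i$ for every $i$; hence $H' \in \Psi$. The same pattern handles axiom (2): given $H, H' \in \Psi$, membership in each $\Psi_i$ together with closure of each $\Psi_i$ under pairwise intersection gives $H \cap H' \in \Psi_i$ for every $i$, whence $H \cap H' \in \Psi$.

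There is no real obstacle here; the statement is an instance of the general principle that any property defined by closure under operations is inherited by arbitrary intersections. The one step worth flagging, and the only place where the argument is not pure formality, is the nonemptiness verification: because the definition requires a variety to be nonempty, one cannot simply assert that $\Psi$ contains anything without first using upward closure in each $\Psi_i$ to deduce that $\N$ must lie in every variety.
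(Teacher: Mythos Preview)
Your argument is correct and is the standard one. Note, however, that the paper does not supply its own proof of this lemma: it is quoted verbatim from \cite[Lemma~6]{Ros10}, so there is no in-paper proof to compare against. Your verification of nonemptiness via $\N\in\Psi_i$ and the pointwise check of axioms (1) and (2) is exactly what one would expect.
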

	
	This result allows us to introduce the concept of variety generated by a
	family of numerical semigroups. Let $\mathscr{F}\subseteq\mathfrak{S}$. We denote by $\left\langle \mathscr{F}\right\rangle $ the intersection of all varieties containing $\mathscr{F}$. By Lemma {\ref{lem arf1}}, $\left\langle \mathscr{F}\right\rangle $ is a variety, and it is the smallest (with respect to set inclusion) variety containing $\mathscr{F}$. We say that $\mathscr{F}$ is a system of generators of $\left\langle \mathscr{F}\right\rangle $. A variety $\Psi$ is finitely generated if it has a finite system of generators.
	\begin{lemma} {\label{lem arf1}}\cite[Lemma $7$]{Ros10}
	Let $\mathscr{F}$ be a family of numerical semigroups. Then
		
		$$\left\langle \mathscr{F}\right\rangle =\left\lbrace S \in\mathfrak{S}: \mathfrak{B}(\mathscr{F})\subseteq S \right\rbrace . $$ 
		We say that a variety $\Psi$ is cyclic if there exists a numerical semigroup $S$ such that $\Psi=\left\langle \left\lbrace S \right\rbrace\right\rangle  $ .
	\end{lemma}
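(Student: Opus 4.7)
Set $\Psi=\{S\in\mathfrak{S}:\mathfrak{B}(\mathscr{F})\subseteq S\}$. The plan is to prove the two inclusions $\langle\mathscr{F}\rangle\subseteq\Psi$ and $\Psi\subseteq\langle\mathscr{F}\rangle$ separately, relying on the fact that the complement of a numerical semigroup in $\N$ is finite.

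For the first inclusion, I would check that $\Psi$ is itself a variety that contains $\mathscr{F}$. Membership of $\mathscr{F}$ in $\Psi$ is immediate from the definition of $\mathfrak{B}(\mathscr{F})$, since $\mathfrak{B}(\mathscr{F})=\bigcap_{T\in\mathscr{F}}T\subseteq S$ for every $S\in\mathscr{F}$. The closure condition ``$S\in\Psi$ and $S\subseteq S'$ implies $S'\in\Psi$'' is automatic by transitivity of $\subseteq$, and closure under binary intersection follows because $\mathfrak{B}(\mathscr{F})\subseteq S\cap S'$ whenever $\mathfrak{B}(\mathscr{F})$ lies in both $S$ and $S'$, together with the standard fact that the intersection of two numerical semigroups is a numerical semigroup (closure under addition and finiteness of the complement are inherited). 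Since $\langle\mathscr{F}\rangle$ is the smallest variety containing $\mathscr{F}$, this gives $\langle\mathscr{F}\rangle\subseteq\Psi$.

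The subtler direction is $\Psi\subseteq\langle\mathscr{F}\rangle$, and this is where the main obstacle lies: $\mathscr{F}$ may be infinite, so $\mathfrak{B}(\mathscr{F})$ need not itself be a numerical semigroup, and a variety is only required to be closed under binary (hence finite) intersections. The trick is to exploit the finiteness of $\N\setminus S$. Fix $S\in\Psi$; then $\mathfrak{B}(\mathscr{F})\subseteq S$, so $\N\setminus S\subseteq\N\setminus\mathfrak{B}(\mathscr{F})$. For each $x\in\N\setminus S$, since $x\notin\bigcap_{T\in\mathscr{F}}T$, there exists $T_x\in\mathscr{F}$ with $x\notin T_x$. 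Because $\N\setminus S$ is finite, the collection $\{T_x:x\in\N\setminus S\}$ is finite, so the intersection $T=\bigcap_{x\in\N\setminus S}T_x$ is a finite intersection of elements of $\mathscr{F}$ and, by induction on the binary-intersection axiom for varieties, belongs to $\langle\mathscr{F}\rangle$.

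To close, I would verify $T\subseteq S$: by construction each $x\in\N\setminus S$ satisfies $x\notin T_x$, hence $x\notin T$, so $\N\setminus S\subseteq\N\setminus T$, which is equivalent to $T\subseteq S$. Applying the upward closure axiom of the variety $\langle\mathscr{F}\rangle$ to $T\in\langle\mathscr{F}\rangle$ and $T\subseteq S$ yields $S\in\langle\mathscr{F}\rangle$. This gives $\Psi\subseteq\langle\mathscr{F}\rangle$, and combined with the first inclusion proves the stated equality.
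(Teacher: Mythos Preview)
The paper does not supply its own proof of this lemma; it is quoted from \cite[Lemma~7]{Ros10} and used without argument. So there is no in-paper proof to compare against.

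Your argument is correct and is in fact the standard one. The first inclusion is routine: $\Psi$ is easily checked to be a variety containing $\mathscr{F}$, so it contains the smallest such. For the reverse inclusion your key idea---using the finiteness of $\N\setminus S$ to replace the possibly infinite intersection $\mathfrak{B}(\mathscr{F})$ by a \emph{finite} intersection of members of $\mathscr{F}$, which then lies in $\langle\mathscr{F}\rangle$ by iterating the binary-intersection axiom---is exactly the right move and is how the original reference proceeds. One tiny edge case you may want to mention explicitly: if $S=\N$ (equivalently $\N\setminus S=\varnothing$), the finite intersection is empty and your construction of $T$ degenerates; but $\N$ belongs to every variety (any variety is nonempty and upward closed), so $S=\N\in\langle\mathscr{F}\rangle$ trivially. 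With that remark added, the proof is complete.
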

	\begin{theorem}{\label{thm arf 2}} Let $H$ be an Arf numerical semigroup and let $\mathrsfso{A}(H)$ the set of all Arf numerical semigroups containing $H$. Then $\left\langle \mathrsfso{A}(H) \right\rangle=\left\langle \left\lbrace H \right\rbrace\right\rangle $.
	\end{theorem}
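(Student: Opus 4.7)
The plan is to reduce the statement to the explicit description of the variety generated by a family that is given in the preceding lemma: for any family $\mathscr{F}$ of numerical semigroups,
$$\langle \mathscr{F} \rangle = \{ S \in \mathfrak{S} : \mathfrak{B}(\mathscr{F}) \subseteq S \},$$
where $\mathfrak{B}(\mathscr{F}) = \bigcap_{T \in \mathscr{F}} T$. Once this is invoked on both sides, proving the theorem amounts to checking the single equality $\mathfrak{B}(\mathrsfso{A}(H)) = \mathfrak{B}(\{H\})$.

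The right-hand side is immediate: $\mathfrak{B}(\{H\}) = H$. For the left-hand side I would argue by two inclusions. First, since $H$ is an Arf numerical semigroup and $H \subseteq H$, the semigroup $H$ itself belongs to $\mathrsfso{A}(H)$; therefore the intersection $\bigcap_{S \in \mathrsfso{A}(H)} S$ is contained in $H$. Conversely, every $S \in \mathrsfso{A}(H)$ contains $H$ by the very definition of $\mathrsfso{A}(H)$, so $H \subseteq \bigcap_{S \in \mathrsfso{A}(H)} S$. Combining the two inclusions gives $\mathfrak{B}(\mathrsfso{A}(H)) = H = \mathfrak{B}(\{H\})$.

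Plugging this back into the characterization lemma, both $\langle \mathrsfso{A}(H) \rangle$ and $\langle \{H\} \rangle$ equal $\{ S \in \mathfrak{S} : H \subseteq S \}$, which proves the theorem. There is no real obstacle here: the whole content lies in noticing that $H$ itself is a member of $\mathrsfso{A}(H)$, so the intersection defining $\mathfrak{B}$ collapses to $H$, and then the description of $\langle \mathscr{F} \rangle$ from the previous lemma does the rest.
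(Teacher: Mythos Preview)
Your argument is correct and follows essentially the same route as the paper: both invoke the characterization $\langle \mathscr{F}\rangle=\{S\in\mathfrak{S}:\mathfrak{B}(\mathscr{F})\subseteq S\}$ and reduce the statement to the observation that $\bigcap_{S\in\mathrsfso{A}(H)}S=H$, which holds because $H\in\mathrsfso{A}(H)$ and every member of $\mathrsfso{A}(H)$ contains $H$. The paper additionally remarks that $\mathrsfso{A}(H)$ is finite (since $\N\setminus H$ is finite), but this is not essential for the argument.
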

	\begin{proof}  Assume that $H$ is an Arf numerical semigroup and  $\mathrsfso{A}(H)$ is the set of all Arf numerical semigroups containing $H$. Since $H$ has finite
		complement in $\N$, $\mathrsfso{A}(H)$ has finitely many elements. Using Lemma {\ref{lem arf1}} and the definition $\mathfrak{B}(\mathscr{\mathrsfso{A}(H)})$, we have
		\begin{eqnarray*}
			\left\langle \mathrsfso{A}(H) \right\rangle & = & \left\lbrace S \in\mathfrak{S}: \mathfrak{B}(\mathrsfso{A}(H) )\subseteq S \right\rbrace 
			\\
			& = &\{  S \in\mathfrak{S}: \bigcap_{H’\in\mathrsfso{A}(H)} H’\subseteq S \} 
			\\
			& = &  \left\lbrace S \in\mathfrak{S}:  H\subseteq S \right\rbrace 
			\\
			& = & \left\langle \left\lbrace H \right\rbrace\right\rangle . 
		\end{eqnarray*}
		
	\end{proof}
	\begin{theorem}{\label{thm arf 3}} Let $\varOmega$ be a finite variety whose elements Arf are numerical semigroups and let $H'=\bigcap_{H\in\varOmega} H$. Then
		$$\left\langle \varOmega \right\rangle=\left\langle \left\lbrace H' \right\rbrace\right\rangle .$$
	\end{theorem}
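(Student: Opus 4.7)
The plan is to adapt the argument of Theorem \ref{thm arf 2}, reducing both sides to the same explicit description via Lemma \ref{lem arf1}. First I would observe that because $\varOmega$ is finite and each of its members is an Arf numerical semigroup, Proposition \ref{prop arf2} ensures that $H'=\bigcap_{H\in\varOmega}H$ is itself an Arf numerical semigroup; in particular $H'=\mathfrak{B}(\varOmega)$, so both $\varOmega$ and $\{H'\}$ are legitimate families of numerical semigroups whose ``base'' submonoid is the same element $H'$ of $\mathfrak{S}$.

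Next I would invoke Lemma \ref{lem arf1} twice. Applied to the family $\varOmega$ it yields
$$\langle \varOmega \rangle = \bigl\{ S \in \mathfrak{S} : \mathfrak{B}(\varOmega) \subseteq S \bigr\} = \bigl\{ S \in \mathfrak{S} : H' \subseteq S \bigr\}.$$
Applied to the singleton family $\{H'\}$, where tautologically $\mathfrak{B}(\{H'\})=H'$, it yields
$$\langle \{H'\} \rangle = \bigl\{ S \in \mathfrak{S} : H' \subseteq S \bigr\}.$$
Since the right-hand sides are identical, the two varieties coincide, which is the desired conclusion.

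There is essentially no obstacle: the theorem is a direct reformulation of Lemma \ref{lem arf1}, and the substantive work is already packaged in that lemma. The only points meriting a sentence of justification are (i) that $H'$ really is a numerical semigroup, so that $\langle\{H'\}\rangle$ is well defined, which uses the finiteness of $\varOmega$ (recalled in the paragraph preceding Lemma \ref{lem arf1}); and (ii) that this intersection remains Arf, which is Proposition \ref{prop arf2}. Both are immediate from the hypotheses, so the proof amounts to the two displayed identifications above.
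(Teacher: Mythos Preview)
Your proof is correct and follows essentially the same route as the paper: both arguments apply Lemma~\ref{lem arf1} to identify $\langle\varOmega\rangle$ with $\{S\in\mathfrak{S}:H'\subseteq S\}$ via $\mathfrak{B}(\varOmega)=H'$, and then recognize this set as $\langle\{H'\}\rangle$. Your version is slightly more explicit in justifying that $H'$ is a numerical semigroup (so that the lemma applies to $\{H'\}$), but the core chain of equalities is identical to the paper's.
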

	\begin{proof}  Let $\varOmega=\{H:H \in \mathfrak{A}\}$ and let  $H'=\bigcap_{H\in\varOmega} H$.  Using Lemma {\ref{lem arf1}} and the definition $\mathfrak{B}(\mathscr{\varOmega})$, we have
		\begin{eqnarray*}
			\left\langle \varOmega \right\rangle & = & \left\lbrace S \in\mathfrak{S}: \mathfrak{B}(\varOmega )\subseteq S \right\rbrace 
			\\
			& = &\{  S \in\mathfrak{S}: \bigcap_{H\in\varOmega} H\subseteq S \} 
			\\
			& = &  \left\lbrace S \in\mathfrak{S}:  H' \subseteq S \right\rbrace 
			\\
			& = & \left\langle \left\lbrace H'  \right\rbrace\right\rangle  .
		\end{eqnarray*}
		
	\end{proof}
\section{Minimal Arf systems of generators of Arf numerical semigroups}
If $X\subset\N$  and $gcd(X)=1$, then  $\left\langle X\right\rangle $ is a numerical semigroup. Any Arf numerical semigroup containing $X$ contains $\left\langle X\right\rangle$. The Arf closure of $\left\langle X\right\rangle$ is the smallest Arf numerical semigroup containing $\left\langle X\right\rangle$ (in terms of set inclusion), by denoted $\arf(\left\langle X\right\rangle )$. When $H$ is an Arf numerical semigroup, we can observe that $\arf(H)=H$. If $\arf(\langle X\rangle)=H$, we say that $X$ is an Arf system of generators of $H$, and we will say that $X$ is minimal if no proper subset of $X$ is an Arf system of generators of $H$. In this section, we will obtain the minimal Arf system of generators of a given Arf numerical semigroup. It is known that every Arf numerical semigroup has a unique minimal Arf system of generators \cite{Branco}. We will denote by $\msgArf(H)$ the minimal Arf system of generators of a given Arf numerical semigroup $H$.

\begin{lemma} {\label{branco 8}}\cite[Lemma $8$]{Branco} Let $H$ be an Arf numerical semigroup and let $a\in H$. The following conditions
	are equivalent:
	\begin{enumerate}
		\item $a$ belongs to the minimal Arf system of generators of $H$,
		\item $H\backslash \{a\}$ is an Arf numerical semigroup.
	\end{enumerate}
\end{lemma}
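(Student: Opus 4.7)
My plan is to prove both implications directly, using that the Arf closure operation produces the smallest Arf numerical semigroup containing a given generating set, and that it is built by iteratively adjoining elements of the form $b+c$ (semigroup sums) and $2b-c$ with $b\geq c$ (the Arf operation).

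For the direction $(2)\Rightarrow(1)$, suppose $H\setminus\{a\}$ is an Arf numerical semigroup, and let $X=\msgArf(H)$. If $a\notin X$, then $X\subseteq H\setminus\{a\}$, so $\langle X\rangle\subseteq H\setminus\{a\}$, and taking Arf closures gives $H=\arf(\langle X\rangle)\subseteq \arf(H\setminus\{a\})=H\setminus\{a\}$, since the Arf closure of an Arf numerical semigroup is itself. This is a contradiction, so $a\in X$.

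For the direction $(1)\Rightarrow(2)$, I would argue the contrapositive: assume $H\setminus\{a\}$ is not an Arf numerical semigroup, and show $a$ is redundant in every Arf system of generators. There are two cases. If $H\setminus\{a\}$ is not closed under addition, then there exist $b,c\in H\setminus\{a\}$ with $b+c\in H\setminus(H\setminus\{a\})=\{a\}$, so $a=b+c$. If $H\setminus\{a\}$ is closed under addition (hence a numerical semigroup) but fails the Arf condition, then there exist $b,c\in H\setminus\{a\}$ with $b\geq c$ and $2b-c\notin H\setminus\{a\}$; since $H$ is Arf we have $2b-c\in H$, forcing $2b-c=a$. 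In either case, $a$ lies in $\arf(\langle H\setminus\{a\}\rangle)$. Consequently, for any Arf system of generators $Y$ of $H$ that happened to contain $a$, the set $Y\setminus\{a\}$ still satisfies $a\in\arf(\langle Y\setminus\{a\}\rangle)$, so $\arf(\langle Y\setminus\{a\}\rangle)=H$ and $a$ is not needed; thus $a\notin\msgArf(H)$.

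The only delicate point is the case split in the second direction — making sure that the failure of the Arf property on $H\setminus\{a\}$ forces $a$ itself to be the missing value, rather than some other element outside $H$. This is secured by using that $H$ is already Arf, so the witness $2b-c$ (or $b+c$) necessarily lies in $H$, and the only element of $H$ absent from $H\setminus\{a\}$ is $a$ itself. Once this is pinned down, both directions follow quickly from the minimality definition of $\msgArf(H)$ and the idempotence of $\arf(\cdot)$ on Arf numerical semigroups.
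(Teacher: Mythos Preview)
The paper does not give its own proof of this lemma; it is quoted verbatim from \cite[Lemma~8]{Branco} and used as a black box. So there is nothing to compare against, and I will simply assess your argument.

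Your direction $(2)\Rightarrow(1)$ is clean and correct.

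In $(1)\Rightarrow(2)$, the case analysis is right: if $H\setminus\{a\}$ fails to be an Arf numerical semigroup, then (ignoring the trivial case $a=0$) either closure under addition or the Arf condition fails, and since $H$ itself is Arf the missing value must be $a$. This correctly yields $a\in\arf(\langle H\setminus\{a\}\rangle)$, hence $\arf(\langle H\setminus\{a\}\rangle)=H$.

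The gap is in your final sentence. From $a\in\arf(\langle H\setminus\{a\}\rangle)$ you write ``Consequently, for any Arf system of generators $Y$ of $H$ that happened to contain $a$, the set $Y\setminus\{a\}$ still satisfies $a\in\arf(\langle Y\setminus\{a\}\rangle)$.'' This inference is not justified: the witnesses $b,c\in H\setminus\{a\}$ with $a=b+c$ or $a=2b-c$ need not lie in $\arf(\langle Y\setminus\{a\}\rangle)$, because building $b$ or $c$ from $Y$ may itself require $a$. The inclusion $Y\setminus\{a\}\subseteq H\setminus\{a\}$ only gives $\arf(\langle Y\setminus\{a\}\rangle)\subseteq\arf(\langle H\setminus\{a\}\rangle)$, which is the wrong direction.

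The fix is immediate using the uniqueness of the minimal Arf system of generators (stated just before the lemma, also from \cite{Branco}): since $\arf(\langle H\setminus\{a\}\rangle)=H$, the set $H\setminus\{a\}$ is an Arf system of generators of $H$; shrink it to a minimal one $X\subseteq H\setminus\{a\}$, and by uniqueness $X=\msgArf(H)$, whence $a\notin\msgArf(H)$.
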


\begin{corollary}{\label{Ros3.19}} \cite[Chapter $2$]{Ros09} Let $H$ be a proper subset of $\N$. Then $H$ is an Arf numerical semigroup
	if and only if there exist positive integers $x_{1},...,x_{n}$ such that
	$$H=\{0,x_{1},x_{1}+x_{2},\dots,x_{1}+\dots+x_{n-1},x_{1}+\dots+x_{n},\rightarrow\}$$
	and  
	$x_{j}\in \{x_{j+1},x_{j+1}+x_{j+2},\dots,x_{j+1}+\dots+x_{n},\rightarrow\}$ for all $j\in \{1,\dots, n\}$.
\end{corollary}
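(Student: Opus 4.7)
The plan is to prove both directions of the equivalence separately, with the forward direction being a direct reading off of the Arf property on consecutive small elements, and the backward direction proceeding by induction on $n$.

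For $(\Rightarrow)$, I would label the small elements of $H$ as $0 = h_0 < h_1 < \cdots < h_n$ with $h_n = \co(H)$, and set $x_j = h_j - h_{j-1}$, so that $H$ takes the stated form. The condition on the $x_j$ then comes from applying the Arf property to the pair $(h_j, h_{j-1})$: the element $2h_j - h_{j-1} = h_j + x_j$ lies in $H$ and is strictly greater than $h_j$, so it is either $h_k$ for some $k$ with $j+1 \le k \le n$, giving $x_j = x_{j+1} + \cdots + x_k$, or else exceeds $h_n$, giving $x_j > x_{j+1} + \cdots + x_n$. In either case $x_j$ lies in the claimed set.

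For $(\Leftarrow)$, I would induct on $n$. The base case $n = 1$, where $H = \{0, x_1, \rightarrow\}$, is checked by hand: the Arf inequality $2h_i - h_j \in H$ is immediate when $h_j = 0$ (then $2h_i \ge x_1$) or when $h_j \ge x_1$ (then $2h_i - h_j \ge h_i \ge x_1$). For the inductive step I would introduce
$$H^{\ast} = \{0, x_2, x_2+x_3, \ldots, x_2+\cdots+x_n, \rightarrow\}.$$
The hypothesis on $x_j$ for $j \ge 2$ is literally the hypothesis of the corollary for $H^{\ast}$, so by the induction hypothesis $H^{\ast}$ is an Arf numerical semigroup. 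The decisive observation is that $H = \{0\} \cup (x_1 + H^{\ast})$, and the $j = 1$ case of the hypothesis says exactly $x_1 \in H^{\ast}$. Closure under addition in $H$ then follows from closure in $H^{\ast}$ combined with $x_1 \in H^{\ast}$, and the Arf identity $2(x_1 + a) - (x_1 + b) = x_1 + (2a - b)$ reduces the Arf property of $H$ on its nonzero elements to that of $H^{\ast}$. The remaining case $h_j = 0$ of the Arf property is then absorbed into closure under addition.

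The main obstacle I anticipate is isolating the right inductive structure: recognizing that the shift $H \setminus \{0\} = x_1 + H^{\ast}$ produces a numerical semigroup of the same combinatorial type, and that the hypothesis on $x_1$ is exactly the glue needed to lift Arfness of $H^{\ast}$ back to Arfness of $H$. Once this structural picture is in place, the actual verification is routine.
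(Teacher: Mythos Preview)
The paper does not supply a proof of this corollary; it is quoted verbatim from \cite[Chapter~2]{Ros09} and used as a black box in Section~4. So there is no in-paper argument to compare against.

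That said, your proposal is a correct and standard proof. The forward direction is exactly right: the Arf condition applied to consecutive small elements $h_{j-1}<h_j$ forces $h_j+x_j\in H$, and decoding where this lands gives the stated membership for $x_j$. For the backward direction, your inductive scheme via $H^{\ast}=\{0,x_2,x_2+x_3,\dots,x_2+\cdots+x_n,\rightarrow\}$ together with the identity $H=\{0\}\cup(x_1+H^{\ast})$ is precisely the ``blow-up'' (Lipman) construction that underlies the treatment in \cite{Ros09}; the $j=1$ hypothesis is exactly $x_1\in H^{\ast}$, which is what makes both the monoid closure and the Arf identity $2(x_1+a)-(x_1+b)=x_1+(2a-b)$ go through. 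One small boundary case you should mention explicitly: $H^{\ast}$ may equal $\N$ (this is not excluded by the hypotheses), so the inductive hypothesis as stated for \emph{proper} subsets of $\N$ does not literally apply there; but $\N$ is trivially Arf, so this causes no trouble.
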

Throughout this section, we assume that $H=\{h_0=0,h_1,\dots,h_n,\rightarrow \}=\{0,x_{1},x_{1}+x_{2},\dots,x_{1}+\dots+x_{n-1},x_{1}+\dots+x_{n},\rightarrow\}$ with $h_{j}=x_{1}+\dots+x_{j}$ for all $j\in \{1,\dots, n\}$ when $H$ is an Arf numerical semigroup, under the conditions of Corollary {\ref{Ros3.19}}
\begin{theorem}{\label{üretec 1}} Let $H$ be an Arf numerical semigroup expressed as in Corollary {\ref{Ros3.19}} and let $h_{i}\in H$ with $h_{i}\leq \fr(H)+1$. Then
	$h_{i}$ is in minimal Arf system of generators of $H$ if and only if for all $j<i$, $x_{j}\neq x_{j+1}+\dots+x_{i}$.
\end{theorem}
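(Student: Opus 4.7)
The starting point is Lemma \ref{branco 8}: $h_i$ belongs to the minimal Arf system of generators of $H$ if and only if $H':=H\setminus\{h_i\}$ is an Arf numerical semigroup. The plan is to read off the step sequence of $H'$ and check it against the criterion in Corollary \ref{Ros3.19}.

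First I would compute the new step sequence $x'$. If $1\le i<n$, then $h'_j=h_j$ for $j<i$ and $h'_j=h_{j+1}$ for $j\ge i$, so $x'_j=x_j$ for $j<i$, $x'_i=x_i+x_{i+1}$, and $x'_j=x_{j+1}$ for $j>i$. If $i=n$, the new conductor is $h_n+1$, giving $x'_n=x_n+1$ while the earlier $x'_j$ are unchanged. Write $T_j:=\{x_{j+1},x_{j+1}+x_{j+2},\dots,x_{j+1}+\cdots+x_n,\rightarrow\}$ for the set appearing in Corollary \ref{Ros3.19}, and let $T'_j$ be the analogous set for $H'$. Using the identity $x'_{j+1}+\cdots+x'_m=h'_m-h'_j$, one sees that for each $j<i$ the partial sums of $x'$ realize every difference $h_m-h_j$ with $m\ne i$, while the $\rightarrow$-tail of $T'_j$ starts at $h'_{n-1}-h'_j=h_n-h_j$, identical to the tail of $T_j$. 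Therefore
\[
T'_j=T_j\setminus\{x_{j+1}+\cdots+x_i\}\qquad\text{for every }j<i.
\]
Since $x_j\in T_j$ (because $H$ is Arf), the Arf test $x'_j\in T'_j$ at index $j<i$ is exactly $x_j\neq x_{j+1}+\cdots+x_i$, which is precisely the theorem's condition.

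It remains to verify that the Arf tests at the remaining indices are automatic. For $j>i$ one has $T'_j=T_{j+1}$ and $x'_j=x_{j+1}\in T_{j+1}$ by the original Arf condition on $H$, and the case $i=n$, $j=n$ is vacuous. The delicate index is $j=i$ with $i<n$, where $T'_i=T_{i+1}$ and I must place $x_i+x_{i+1}$ into $T_{i+1}$. For this I would introduce the tail semigroups $H_k:=\{0,x_{k+1},x_{k+1}+x_{k+2},\dots,\rightarrow\}$, which are Arf by Corollary \ref{Ros3.19}, and prove by induction the ascending chain $H_k\subseteq H_{k+1}$: each small element $x_{k+1}+\cdots+x_\ell$ of $H_k$ is a sum of elements of $H_{k+1}$, since $x_{k+1}\in T_{k+1}\subseteq H_{k+1}$ by Arf and $H_{k+1}$ is closed under addition. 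Applying this chain at $k=i$ gives $x_i\in H_i\subseteq H_{i+1}$ and $x_{i+1}\in H_{i+1}$, so $x_i+x_{i+1}\in H_{i+1}\setminus\{0\}=T_{i+1}$, as needed.

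The main technical hurdle is the clean identification $T'_j=T_j\setminus\{x_{j+1}+\cdots+x_i\}$, which forces one to track simultaneously both the finite list of partial sums and the starting point of the $\rightarrow$-tail in both sets; the rest consists of routine bookkeeping and the inclusion between consecutive tail semigroups. Once these pieces are assembled, combining the index-range checks with Lemma \ref{branco 8} yields the stated equivalence.
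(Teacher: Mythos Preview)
Your proposal is correct and follows essentially the same route as the paper: both invoke Lemma~\ref{branco 8}, write down the modified step sequence $x'$ for $H\setminus\{h_i\}$, and test it against the criterion of Corollary~\ref{Ros3.19}. Your treatment is in fact more thorough than the paper's at the index $j=i$, where the paper simply asserts $x'_i=x_i+x_{i+1}\in T'_i$ without argument; your tail-semigroup inclusion $H_i\subseteq H_{i+1}$ supplies precisely that missing verification.
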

\begin{proof} Let $H$ be an Arf numerical semigroup expressed as in Corollary {\ref{Ros3.19}}. Then $H=\{h_0=0,h_1,\dots,h_n,\rightarrow \}=\{0,x_{1},x_{1}+x_{2},\dots,x_{1}+\dots+x_{n-1},x_{1}+\dots+x_{n},\rightarrow\}$ with $h_{j}=x_{1}+\dots+x_{j}$ for all $j\in \{1,\dots, n\}$.\\
$(\Rightarrow)$ Assume that $h_{i}\in \msgArf(H)$ and  $x_{j}= x_{j+1}+\dots+x_{i}$  for any $j<i$. Take $h_{j-1},h_{j}\in H \backslash \{ h_{i}\}$ for $j<i$. Then $2h_{j}-h_{j-1}=2(x_{1}+\dots+x_{j})-(x_{1}+\dots+x_{j-1})=(x_{1}+\dots+x_{j})+x_{j}=(x_{1}+\dots+x_{j})+(x_{j+1}+\dots+x_{i})=h_{i}$. Thus, $2h_{j}-h_{j-1}\notin H \backslash \{ h_{i}\}$. We deduce that $H \backslash \{ h_{i}\}$ is not an Arf numerical semigroup, which contradicts the fact that $h_{i}$ is in minimal Arf system of generators of $H$.\\
$(\Leftarrow)$ Assume that $x_{j}\neq x_{j+1}+\dots+x_{i}$ for all $j<i$. At the same time, since $H$ is an Arf numerical semigroup, by Corollary {\ref{Ros3.19}} we have	$x_{j}\in \{x_{j+1},x_{j+1}+x_{j+2},\dots,x_{j+1}+\dots+x_{n},\rightarrow\}$ for all $j\in \{1,\dots, n\}$. This means that $x_{j}\in \{x_{j+1},x_{j+1}+x_{j+2},\dots,x_{j+1}+\dots+x_{i-1},x_{j+1}+\dots+x_{i+1},\dots,x_{j+1}+\dots+x_{n},\rightarrow\}$ for all $j<i$. Let us set $x’_{j}$ as follows
		\begin{displaymath}
		x’_{j}= \left\{ \begin{array}{ll}
			x_{j} & \textrm{for $j<i$ }\\
			x_{i}+x_{i+1} & \textrm{for $j=i$ }\\
			x_{j+1} & \textrm{for $j>i$}.
		\end{array} \right.
	\end{displaymath}
	In this a case, we have $$x’_{j}\in \{x’_{j+1},x’_{j+1}+x’_{j+2},\dots,x’_{j+1}+\dots+x’_{i},\dots,x’_{j+1}+\dots+x’_{n-1},\rightarrow\}$$ for all $j\in \{1,\dots, n-1\}$. By Corollary {\ref{Ros3.19}} again we can write an Arf numerical semigroup $H'$ with positive integers $x’_{1},x’_{2},\dots, x’_{n-1}$ as follows;
	\[
	\begin{array}{ll}
	H'&=\{h'_0=0,h'_1,\dots,h'_{n-1},\rightarrow \}\\
	&=\{0,x'_{1},x'_{1}+x'_{2},\dots,x'_{1}+\dots+x'_{i},\dots,x'_{1}+\dots+x'_{n-1},\rightarrow\}
    \end{array}
	\]
	with $h'_{j}=x'_{1}+\dots+x'_{j}$ for all $j\in \{1,\dots, n-1\}$. It is not hard to see that
		\begin{displaymath}
		h’_{j}= \left\{ \begin{array}{ll}
			h_{j} & \textrm{for $j<i$ }\\
			h_{j+1} & \textrm{for $j\geq i$}.
		\end{array} \right.
	\end{displaymath}
Therefore,
\[
\begin{array}{ll}
H'&=\{h'_0=0,h'_1,\dots,h'_{n-1},\rightarrow \}\\
&=\{h_0=0,h_1,\dots,h_{i-1},h_{i+1},\dots,h_{n}\rightarrow\}= H \backslash \{ h_{i}\}.
\end{array}
\] 
Hence, $H \backslash \{ h_{i}\}$ is an Arf numerical semigroup, and so $h_{i}\in\msgArf(H)$ as a result of Lemma {\ref{branco 8}}. 
\end{proof} 
\begin{theorem}{\label{üretec 2}} Let $H$ be an Arf numerical semigroup expressed as in Corollary {\ref{Ros3.19}}. Then
	$\fr(H)+2$ is in the minimal Arf system of generators of $H$ if and only if   $x_{j}\neq x_{j+1}+\dots+x_{n}+1$ for all $j\in \{1,\dots, n\}$.
\end{theorem}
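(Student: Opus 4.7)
The strategy is to reduce the problem, via Lemma \ref{branco 8}, to showing that $H \setminus \{\fr(H)+2\}$ is an Arf numerical semigroup precisely when the stated arithmetic condition holds, and then to re-express that deleted set in the canonical form given by Corollary \ref{Ros3.19}. Since $\co(H) = h_n = x_1+\cdots+x_n$ and $\fr(H) = h_n - 1$, we have $\fr(H)+2 = h_n+1 \in H$, and Lemma \ref{branco 8} tells us that $\fr(H)+2 \in \msgArf(H)$ if and only if
\[
H \setminus \{\fr(H)+2\} = \{0,h_1,\ldots,h_n,h_n+2,\rightarrow\}
\]
is an Arf numerical semigroup.

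To bring this set in line with Corollary \ref{Ros3.19}, I would introduce $x'_j = x_j$ for $j \in \{1,\ldots,n\}$ together with $x'_{n+1}=2$, so that
\[
H \setminus \{\fr(H)+2\} = \{0,\,x'_1,\,x'_1+x'_2,\,\ldots,\,x'_1+\cdots+x'_{n+1},\rightarrow\}.
\]
Corollary \ref{Ros3.19} then characterises the Arf property of this set as requiring $x'_j \in \{x'_{j+1},\, x'_{j+1}+x'_{j+2},\,\ldots,\, x'_{j+1}+\cdots+x'_{n+1},\rightarrow\}$ for every $j \in \{1,\ldots,n+1\}$. The case $j=n+1$ imposes no constraint, and for $j \leq n$ the condition reads
\[
x_j \in \{x_{j+1},\,x_{j+1}+x_{j+2},\,\ldots,\,x_{j+1}+\cdots+x_n,\;x_{j+1}+\cdots+x_n+2,\rightarrow\}.
\]

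The final step is to compare this admissible set with the one provided by the Arf structure of $H$ itself, which by Corollary \ref{Ros3.19} guarantees $x_j \in \{x_{j+1},\,\ldots,\,x_{j+1}+\cdots+x_n,\rightarrow\}$. The two sets differ by exactly the single integer $x_{j+1}+\cdots+x_n+1$; hence the strengthened condition is equivalent, under the standing assumption that $H$ is Arf, to the requirement $x_j \neq x_{j+1}+\cdots+x_n+1$ for all $j \in \{1,\ldots,n\}$. Combining this with the reduction supplied by Lemma \ref{branco 8} yields the theorem. The main obstacle I anticipate is the careful identification of the "missing" element when the tail $\{h_n,\rightarrow\}$ in the old admissible set is replaced by $\{h_n+2,\rightarrow\}$ in the new one; once this bookkeeping is handled cleanly the result follows immediately from Corollary \ref{Ros3.19} and Lemma \ref{branco 8}.
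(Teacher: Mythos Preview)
Your proposal is correct and follows essentially the same route as the paper: reduce via Lemma~\ref{branco 8} to the Arf property of $H\setminus\{\fr(H)+2\}$, encode that set with the augmented sequence $x'_1=x_1,\dots,x'_n=x_n,x'_{n+1}=2$, and invoke Corollary~\ref{Ros3.19}. The only cosmetic difference is that the paper handles the forward direction by a direct computation ($2h_j-h_{j-1}=h_n+1$ when $x_j=x_{j+1}+\cdots+x_n+1$) rather than by reading Corollary~\ref{Ros3.19} as a two-sided characterisation, but the substance is identical.
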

\begin{proof} Let $H$ be an Arf numerical semigroup expressed as in Corollary {\ref{Ros3.19}}. Then $H=\{h_0=0,h_1,\dots,h_n,\rightarrow \}=\{0,x_{1},x_{1}+x_{2},\dots,x_{1}+\dots+x_{n-1},x_{1}+\dots+x_{n},\rightarrow\}$ with $h_{j}=x_{1}+\dots+x_{j}$ for all $j\in \{1,\dots, n\}$. Also note that $\fr(H)+2=x_{1}+\dots+x_{n}+1=h_{n}+1$. \\
$(\Rightarrow)$  Assume that $\fr(H)+\in \msgArf(H)$ and $x_{j}= x_{j+1}+\dots+x_{n}+1$  for any $j\leq n$. Now take  $h_{j-1},h_{j}\in H \backslash \{ \fr(H)+2\}$ for $j\leq n$. Then $2h_{j}-h_{j-1}=2(x_{1}+\dots+x_{j})-(x_{1}+\dots+x_{j-1})=(x_{1}+\dots+x_{j})+x_{j}=(x_{1}+\dots+x_{j})+(x_{j+1}+\dots+x_{n}+1)=h_{n}+1=\fr(H)+2$. Hence, $2h_{j}-h_{j-1}\notin H \backslash \{ \fr(H)+2\}$. We deduce that $H \backslash \{ \fr(H)+2\}$ is not an Arf numerical semigroup, which contradicts the fact that $\fr(H)+2$ is in minimal Arf system of generators of $H$.\\
$(\Leftarrow)$ Assume that $x_{j}\neq x_{j+1}+\dots+x_{n}+1$ for all  $j\in \{1,\dots, n\}$. Since $H$ is an Arf numerical semigroup, for all $j\in \{1,\dots, n\}$
$$x_{j}\in \{x_{j+1},x_{j+1}+x_{j+2},\dots,x_{j+1}+\dots+x_{n},x_{j+1}+\dots+x_{n}+2\rightarrow\}.$$
Let us set $x’_{n+1}=2$ and $x’_{j}=x_{j}$ for $i\leq n$. According to Corollary {\ref{Ros3.19}} and with  positive integers $x’_{1},...,x’_{n},x’_{n+1}$, we can write an Arf numerical semigroup $H'$ as follows,
\[
\begin{array}{ll}
	H'&=\{h'_0=0,h'_1,\dots,h'_{n},h'_{n+1},\rightarrow \}\\
	&=\{0,x'_{1},x'_{1}+x'_{2},\dots,x'_{1}+\dots+x'_{n},\dots,x'_{1}+\dots+x'_{n+1},\rightarrow\},
\end{array}
\]
with $h'_{j}=x'_{1}+\dots+x'_{j}$ for all $j\in \{1,\dots, n+1\}$. We can get that
\begin{displaymath}
	h’_{j}= \left\{ \begin{array}{ll}
		h_{j} & \textrm{for $j\leq n$ }\\
		h_{n}+2 & \textrm{for $j=n+1$}.
	\end{array} \right.
\end{displaymath}
Therefore,
\[
\begin{array}{ll}
	H'&=\{h'_0=0,h'_1,\dots,h'_{n+1},\rightarrow \}\\
	&=\{h_0=0,h_1,\dots,h_{i-1},h_{i+1},\dots,h_{n},h_{n}+2\rightarrow\}\\
&= H \backslash \{ h_{n}+1\}=H \backslash \{ \fr(H)+2\}.
\end{array}
\] 
Hence, $H \backslash \{ \fr(H)+2\}$ is an Arf numerical semigroup, and so $\fr(H)+2\in\msgArf(H)$ as a result of Lemma {\ref{branco 8}}. 
\end{proof} 
\begin{remark}  Let $H=\{0,h_{1},h_{2},\dots h_{n},\rightarrow \} $ be a numerical semigroup. Note that $\msgArf(H)\subseteq\{ h_{1},h_{2},\dots h_{n}\}\cup\{ h_{n}+1\}$.
\end{remark}
A tree whose elements have at most two children is called a binary tree. The root of a tree is the top most node of the tree that has no parent node. There is only one root node in every tree. A node that has no child is known as the leaf node. There can be multiple leaf nodes in a tree. In \cite[Section  $2$]{Branco}, a binary tree of Arf numerical semigroups is obtained using Corollary  {\ref{branco 10}. This tree is constructed by starting from the set of non-negative integers and then removing the minimal Arf generators greater than the Frobenius number of each Arf numerical semigroup. Next,  we characterize the Arf numerical semigroups found in the leaves of this tree.
	\begin{corollary}{\label{branco 10}}
		\cite[Corollary $10$]{Branco} Let $H$  be an Arf numerical semigroup. Then $H$  is a leaf if and only if the
		minimal Arf system of generators of $H$  does not contain elements greater than $\fr(H)$. 
	\end{corollary}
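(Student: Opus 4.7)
The plan is to unwind the tree construction that is described just before the statement and reduce everything to the equivalence provided by Lemma \ref{branco 8}. According to that construction, starting from $\N$ at the root, a node $H$ in the tree has as its children exactly those Arf numerical semigroups of the form $H\setminus\{a\}$ where $a$ is a minimal Arf generator of $H$ satisfying $a>\fr(H)$. A leaf is, by definition, a node with no children, so the whole statement is really an unpacking exercise once we know how removal of a generator interacts with the Arf property.

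First I would verify that the tree's prescribed children are well-defined Arf numerical semigroups. By Lemma \ref{branco 8}, if $a$ is a minimal Arf generator of $H$, then $H\setminus\{a\}$ is an Arf numerical semigroup, and conversely. Hence the possible children of $H$ in the tree are precisely the sets $H\setminus\{a\}$ with $a\in\msgArf(H)$ and $a>\fr(H)$. This identification is all we need.

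Next I would split the equivalence. For the forward direction, suppose $H$ is a leaf and, for contradiction, suppose that $\msgArf(H)$ contains some $a>\fr(H)$. Then by Lemma \ref{branco 8} the set $H\setminus\{a\}$ is Arf, and by the tree rule it is a child of $H$, contradicting the leaf hypothesis. For the reverse direction, suppose no element of $\msgArf(H)$ exceeds $\fr(H)$. Then the set of admissible removals $\{a\in\msgArf(H):a>\fr(H)\}$ is empty, so $H$ has no child in the tree, i.e.\ $H$ is a leaf.

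The hard part, to the extent there is one, is simply making sure to cite Lemma \ref{branco 8} in the right direction on each side of the equivalence and to respect the restriction $a>\fr(H)$ that is built into the tree construction. Everything else is formal: there are no estimates to prove, no induction, and no new semigroup-theoretic input beyond what Lemma \ref{branco 8} already supplies.
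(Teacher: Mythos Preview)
Your argument is correct. Note, however, that the paper does not supply its own proof of this corollary: it is quoted from \cite[Corollary~10]{Branco} and stated without proof. Your reasoning is exactly the natural one---once the tree is described as having children $H\setminus\{a\}$ for $a\in\msgArf(H)$ with $a>\fr(H)$, the equivalence with ``leaf'' is immediate, and Lemma~\ref{branco 8} is what guarantees those children are genuine Arf numerical semigroups. There is nothing to compare against here beyond the bare citation.
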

	\begin{corollary}{\label{leaf}} Let $H$ be an Arf numerical semigroup expressed as in Corollary {\ref{Ros3.19}}. Then $H$  is a leaf if and only if there exist $i,j\in \{1,\dots, n\}$ such that  $x_{i}= x_{i+1}+\dots+x_{n}$  and $x_{j}= x_{j+1}+\dots+x_{n}+1$.
	\end{corollary}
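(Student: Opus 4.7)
The plan is to deduce this directly from Corollary~\ref{branco 10} together with Theorems~\ref{üretec 1} and~\ref{üretec 2}, so the work is essentially bookkeeping about which elements of $H$ can lie simultaneously above $\fr(H)$ and inside $\msgArf(H)$.

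First I would pin down that, since $H=\{0,h_1,\dots,h_n,\rightarrow\}$ has $h_n=\co(H)=\fr(H)+1$, the elements of $H$ that exceed $\fr(H)$ are exactly $h_n,h_n+1,h_n+2,\dots$. By the Remark immediately preceding Corollary~\ref{branco 10}, every element of $\msgArf(H)$ lies in $\{h_1,\dots,h_n\}\cup\{h_n+1\}$. Consequently the only candidate minimal Arf generators strictly greater than $\fr(H)=h_n-1$ are $h_n$ and $h_n+1$. Applying Corollary~\ref{branco 10}, $H$ is a leaf if and only if neither $h_n$ nor $h_n+1$ belongs to $\msgArf(H)$.

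Next I would invoke Theorem~\ref{üretec 1} with $i=n$ (note $h_n\le \fr(H)+1$, so the hypothesis is met): $h_n\in\msgArf(H)$ iff $x_j\neq x_{j+1}+\dots+x_n$ for every $j<n$. Negating, $h_n\notin\msgArf(H)$ iff there exists $i\in\{1,\dots,n\}$ with $x_i=x_{i+1}+\dots+x_n$; I would note that $i=n$ is vacuous because $x_n\ge 1$, so the existential range can be taken as $\{1,\dots,n\}$ without loss. Similarly, Theorem~\ref{üretec 2} gives that $h_n+1=\fr(H)+2\notin\msgArf(H)$ iff there exists $j\in\{1,\dots,n\}$ with $x_j=x_{j+1}+\dots+x_n+1$.

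Finally I would combine these two equivalences: $H$ is a leaf iff both $h_n\notin\msgArf(H)$ and $h_n+1\notin\msgArf(H)$, which is exactly the conjunction of the two existence conditions in the statement. There is no genuine obstacle here beyond being careful that the indices $i,j$ in the corollary are allowed to coincide and that the case $i=n$ is automatically excluded; the substance is entirely carried by the previous two theorems and Corollary~\ref{branco 10}.
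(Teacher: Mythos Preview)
Your proposal is correct and follows essentially the same route as the paper: both arguments reduce the leaf condition via Corollary~\ref{branco 10} to the statement that neither $h_n=\fr(H)+1$ nor $h_n+1=\fr(H)+2$ lies in $\msgArf(H)$, and then read off the two existential conditions from Theorems~\ref{üretec 1} and~\ref{üretec 2}. Your version is in fact slightly more careful about the index range (observing that $i=n$ is vacuous in the first condition) and about invoking the Remark to limit the candidates above $\fr(H)$, but the substance is identical.
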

	\begin{proof} Assume that $H$ is an Arf numerical semigroup expressed as in Corollary {\ref{Ros3.19}}.
		
		$(\Rightarrow)$ If $H$  is a leaf, then the minimal Arf system of generators of $H$  does not contain elements greater than $\fr(H)$ by Corollary {\ref{branco 10}}. If $\fr(H)+1=x_{1}+\dots+x_{n}\notin \msgArf(H)$, then there exist $i\in \{1,\dots, n\}$ such that $x_{i}= x_{i+1}+\dots+x_{n}$ by Theorem {\ref{üretec 1}}.  If $\fr(H)+2=x_{1}+\dots+x_{n}+1\notin \msgArf(H)$, then there exist $j\in \{1,\dots, n\}$ such that $x_{j}= x_{j+1}+\dots+x_{n}$ by Theorem {\ref{üretec 2}}.  
		
		$(\Leftarrow)$ If there exist $i,j\in \{1,\dots, n\}$ such that  $x_{i}= x_{i+1}+\dots+x_{n}$  and $x_{j}= x_{j+1}+\dots+x_{n}+1$, then $x_{1}+\dots+x_{n}, x_{1}+\dots+x_{n}+1\notin \msgArf(H)$ by Theorem {\ref{üretec 1}} and Theorem {\ref{üretec 2}}. From  Corollary {\ref{leaf}}, $H$  is a leaf.
	\end{proof} 
Algorithm  {\ref{CHalgorithm 2}} provides the procedure to calculate the minimal Arf system of generators of a given Arf numerical semigroup.

\begin{algorithm}[H]
	\label{CHalgorithm 2}
	\DontPrintSemicolon
	
	\KwInput{An Arf Numerical Semigroup $H=\{h_0=0,h_1,\dots,h_n,\rightarrow \}	$}
	\KwOutput{The minimal Arf system of generators of of $H$, $\msgArf(H)$}
	\KwInitialize{$\msgArf(H)= \varnothing $}
	\KwProcedure{The following}
	
	Calculate $x_{0}=0$ and $x_{i}=h_{i}-h_{i-1}$ for $i\in \{1,\dots, n \}$
	\\ If $x_{j}\neq x_{j+1}+\dots+x_{i}$ for all $i\in \{1,\dots, n \}$ and $j\in \{0,\dots,i-1\}$, then add $x_{1}+\dots+x_{i}=h_{i}$ to $\msgArf(H)$.
	\\ If $x_{j}\neq x_{j+1}+\dots+x_{n}+1$ for all $j\in \{0,\dots,n\}$, then add $x_{1}+\dots+x_{n}+1=h_{n}+1$ to $\msgArf(H)$.\\
	Output $\msgArf(H)$, and stop
	\caption{Algorithm for computing the minimal Arf system of generators  of given an Arf numerical semigroup}
\end{algorithm}
\begin{exam}{\label{msg 2}}  Let $H$ be the Arf numerical semigroup
	$$H=\langle 8,20,27,29,30,31,33,34\rangle. $$ 
	Let us compute $\msgArf(H)$ using Algorithm  {\ref{CHalgorithm 2}}.

$$H=\langle 8,20,27,29,30,31,33,34\rangle=\{0,8,16,20,24,27,\rightarrow \}$$
Performing the steps of the above algorithm
	\begin{enumerate}
		\item Let us calculate step 1: $x_{0}=0,x_{1}=8,x_{2}=8,x_{3}=4,x_{4}=4,x_{5}=3$.
		\item  Let us calculate step 2:
		\begin{enumerate}
				\item If $i=1$, then $j=0$. Since $x_{0}\neq x_{1}$,\\ we get  $h_{1}=x_{1}=8\in \msgArf(H)$.
				\item If $i=2$, then $j\in \{0,1\}$. Since \\
				$x_{0}\neq x_{1}+x_{2}$,\\ $x_{1}=x_{2}$,\\ we get $h_{2}=x_{1}+x_{2}=16\notin \msgArf(H)$.
				\item If $i=3$, then $j\in \{0,1,2\}$. Since\\ $x_{0}\neq x_{1}+x_{2}+x_{3}$,\\ $x_{1}\neq x_{2}+x_{3},$ \\ $x_{2}\neq x_{3}$,\\ we get $h_{3}=x_{1}+x_{2}+x_{3}=20\in \msgArf(H)$.
				\item If $i=4$, then $j\in \{0,1,2,3\}$. Since\\ $x_{0}\neq x_{1}+x_{2}+x_{3}+x_{4},$ \\$x_{1}\neq x_{2}+x_{3}+x_{4}$,\\ $x_{2}= x_{3}+x_{4}$,\\ $x_{3}= x_{4}$,\\ we get $h_{4}=x_{1}+x_{2}+x_{3}+x_{4}=24\notin \msgArf(H)$.
				\item If $i=5$, then $j\in \{0,1,2,3,4\}$. Since\\ $x_{0}\neq x_{1}+x_{2}+x_{3}+x_{4}+x_{5}$,\\ $x_{1}\neq x_{2}+x_{3}+x_{4}+x_{5}$,  \\ $x_{2}\neq x_{3}+x_{4}+x_{5}$,\\ $x_{3}\neq x_{4}+x_{5}$,\\ $x_{4}\neq x_{5}$, \\we get $h_{5}=x_{1}+x_{2}+x_{3}+x_{4}+x_{5}=27\in \msgArf(H)$.	 
			\end{enumerate}
	\item  Let us calculate step 3: for $j\in \{0,1,\dots n\}$\\
	$x_{0}\neq x_{1}+x_{2}+x_{3}+x_{4}+x_{5}+1,$\\
	 $x_{1}\neq x_{2}+x_{3}+x_{4}+x_{5}+1,$\\
	 $x_{2}\neq x_{3}+x_{4}+x_{5}+1,$\\
	 $x_{3}\neq x_{4}+x_{5}+1,$\\
	 $x_{4}=x_{5}+1,$\\
	 we get $h_{5}+1=x_{1}+x_{2}+x_{3}+x_{4}+x_{5}+1=28\notin \msgArf(H)$.
	 \item  Thus $\msgArf(H)=\{8,20,27\}$
	 \end{enumerate}
	 	Let us make the computations with the \href{https://www.gap-system.org/}{ {\bf GAP}}  session with \href{https://gap-packages.github.io/numericalsgps}{ {\bf numericalsgps} } package.
	 	\begin{verbatim}
	 	gap> s:=NumericalSemigroup(8,20,27,29,30,31,33,34);;
	 	gap> ArfCharactersOfArfNumericalSemigroup(s);
	 	[ 8, 20, 27 ]
	 	\end{verbatim}

\end{exam} 
\begin{remark}  When we remove each minimal generator of the numerical semigroup given in Examle {\ref{msg 2}} from the numerical semigroup, the obtained numerical semigroups are  $H_{1}=\{0,16,20,24,27,\rightarrow \}$ and  $H_{2}=\{0,8,16,24,27,\rightarrow \}$ and $H_{3}=\{0,8,16,20,24,28,\rightarrow \}$. Moreever,  $\arfg( H_{1})=\{8,12,26\}$ and $\arfg( H_{2})=\{20,26\}$ and $\arfg( H_{3})=\{12,26,27\}$. Note that  $H_{1}\cup \{8\}=H $ and $H_{2}\cup \{20\}=H $ and $H_{3}\cup \{27\}=H $.
\end{remark}

\section*{Acknowledgement}
The author would like to thank Pedro A. García Sánchez for his carefully reading of an earlier version of the paper, for his helpful comments and suggestions, and for his implementing the presented procedures in the \href{https://www.gap-system.org/}{ {\bf GAP}}.

\end{document}